\theoremstyle{plain}
\newtheorem{theorem}{Theorem}[section]
\newtheorem{definition}[theorem]{Definition}
\newtheorem{lemma}[theorem]{Lemma}
\newtheorem{corollary}[theorem]{Corollary}
\theoremstyle{remark}
\def\R{{\mathbf R}}
\def\T{{\mathbf T}}
\def\N{{\mathbf N}}
\def\Z{{\mathbf Z}}
\def\Q{\mathbf Q}
\def\dd{\mathrm d}
\def\({\left(}
\def\){\right)}
\def\<{\left\langle}
\def\>{\right\rangle}
\def\1{{\mathbf 1}}
\newcommand{\D}{\mathbf{D}}
\newcommand{\Tv}{\mathbf{T}_{\nu}}
\newcommand{\Sv}{\mathbf{S}_{\nu}}
\newcommand{\Av}{\mathbf{A}_{\nu}}
\newcommand{\Tvn}{\mathbf{T}_{\nu,n}}
\newcommand{\Svn}{\mathbf{S}_{\nu,n}}
\newcommand{\Sk}{\mathbf{S}_{\kappa}}
\newcommand{\Skn}{\mathbf{S}_{\kappa,n}}
\newcommand{\tun}{\tilde{u}_n}
\newcommand{\Qn}{\mathcal{Q}_n}
\newcommand{\bdl}{\beta_{\delta}^l}
\newcommand{\nyb}{\nabla_y\bdl}
\DeclareMathOperator{\diver}{div}
\DeclareMathOperator{\dive}{div}
\numberwithin{equation}{section}
\date\today
\title[Global existence of FEWS to QNS with non-trivial far-field]{Global existence of finite energy weak solutions to the Quantum Navier-Stokes equations with non-trivial far-field behavior}
\author[P. Antonelli]{Paolo Antonelli}
\address{
Gran Sasso Science Institute, viale Francesco Crispi, 7, 67100 L'Aquila, Italy}
\email{paolo.antonelli@gssi.it}
\author[L.E. Hientzsch]{Lars Eric Hientzsch}
\address{
Univ. Grenoble Alpes, CNRS, Institut Fourier, 100 rue des Math{\'e}matiques, 38610 Gi{\`e}res, France}
\email{larseric.hientzsch@univ-grenoble-alpes.fr}
\author[S. Spirito]{Stefano Spirito}
\address{
DISIM - Dipartimento di Ingegneria e Scienze dell'€™Informazione e Matematica, Universit{\`a} degli Studi dell'€™Aquila, Via Vetoio, 67100 L'€™Aquila, Italy}
\email{stefano.spirito@univaq.it}
\subjclass{{Primary: 35Q35 ; Secondary: 35D05, 76N10.}}
 \keywords{compressible fluids, quantum Navier-Stokes equation, degenerate viscosity, global existence, far-field behavior, vacuum}
\begin{document}
\begin{abstract}
We prove global existence of finite energy weak solutions to the quantum Navier-Stokes equations in the whole space with non trivial far-field condition in dimensions d = 2,3. The vacuum regions are included in the weak formulation of the equations. Our method consists in an invading domains approach. More precisely, by using a suitable truncation argument we construct a sequence of approximate solutions. The energy and the BD entropy bounds allow for the passage to the limit in the truncated formulation leading to a finite energy weak solution. Moreover, the result is also valid in the case of compressible Navier-Stokes equations with degenerate viscosity. 
\end{abstract}

\maketitle

\section{Introduction}

The aim of the paper is to study the Cauchy Problem associated to the following Quantum-Navier-Stokes equations (QNS) in $(0,T)\times \R^d$ for $d=2,3$, 
\begin{equation}\label{eq:QNSFEWS}
\begin{aligned}
\begin{cases}
&\partial_t\rho+\dive (\rho u)=0,\\
&\partial_t(\rho u)+\dive\left(\rho u \otimes u \right)+\nabla p(\rho)=2\nu \dive(\rho\D u)+2\kappa^2\rho \nabla\left(\frac{\Delta\sqrt{\rho}}{\sqrt{\rho}}\right),
\end{cases}
\end{aligned}
\end{equation}
with the far-field behaviour  
\begin{equation}\label{eq:farfield}
\rho \rightarrow 1 \hspace{1cm} \text{as} \hspace{0.5cm} |x|\rightarrow \infty.
\end{equation}
The unknowns are the mass density $\rho$ and the velocity field $u$, the pressure is given by the $\gamma$-law with $\gamma>1$ and $\nu$ and $\kappa$ are the viscosity and the capillarity coefficients, respectively.
The energy of the system \eqref{eq:QNSFEWS} is defined as
\begin{equation}\label{eq:energy}
E(t)=\int_{\R^d}\frac{1}{2}\rho|u|^2+2\kappa^2|\nabla\sqrt{\rho}|^2+F(\rho) \dd x,
\end{equation}
where the internal energy reads
\begin{equation}\label{eq:ren_ieQNSFEWS}
    F(\rho)=\frac{\rho^{\gamma}-1-\gamma(\rho-1)}{\gamma(\gamma-1)}.
\end{equation}
Notice that the integrability of \eqref{eq:ren_ieQNSFEWS} encodes the far-field condition for finite energy solutions.
The Quantum Navier-Stokes equations falls within the more general class of Navier-Stokes-Korteweg systems which include capillarity effects in the dynamics of the fluid \cite{K}. Such systems are given by the following equations
\begin{equation}\label{eq:Kortewegintro}
\begin{aligned}
&\partial_t\rho+\diver(\rho u)=0,\\
&\partial_t(\rho u)+\diver(\rho u \otimes u)+\nabla P(\rho)=2\nu\diver(\mathbb{S})+ \kappa^2 \diver(\mathbb{K}),
\end{aligned}
\end{equation}
where the viscous stress tensor $\mathbb{S}=\mathbb{S}(\nabla u)$ is determined by
\begin{equation*}
\mathbb{S}=h(\rho)\D u+g(\rho)\diver u \mathbf{I},
\end{equation*}
and $h$ and $g$ are called viscosity coefficients satisfying $h(\rho)\geq 0$ and $h(\rho)+d\,g(\rho)\geq 0$. 
The capillary term $\mathbb{K}=\mathbb{K}(\rho,\nabla\rho)$ reads
\begin{equation*}
\mathbb{K}=\left(\rho\diver(k(\rho)\nabla\rho)-\frac{1}{2}(\rho k'(\rho)-k(\rho)|\nabla\rho|^2)\right)\mathbb{I}-k(\rho)\nabla\rho\otimes\nabla\rho,
\end{equation*}
where $k(\rho)\geq 0$ is called the capillarity coefficient. 
The stress tensor $\mathbb{K}$ describes the capillarity effects and it was originally introduced by Korteweg \cite{K} and rigorously derived in the present form by \cite{DS}. The description of quantum fluids also relies on similar systems, e.g. the QHD system, namely the inviscid counter-part ($\nu=0$) of \eqref{eq:QNSFEWS}, arises as model for superfluidity and Bose-Einstein condensation \cite{LL}. The Cauchy Problem with trivial far-field condition is investigated in \cite{AM, AM16} and the QHD system with non-trivial far-field condition will be addressed in the ongoing work \cite{AHM}.
The quantum Navier--Stokes equation that we study below is obtained from \eqref{eq:Kortewegintro} by setting $h(\rho)=\rho$, $g(\rho)=0$ and $k(\rho)=\frac{1}{\rho}$. \par
Alternatively, the QNS system can also be derived as the moment closure of a Wigner equation with a BGK type collision term \cite{BM, JMi}, see also \cite{J} where several dissipative quantum fluid models are derived by means of a moment closure of (quantum) kinetic equations with appropriate choices of the collision term. In the absence of capillary effects, namely $\kappa=0$, system \eqref{eq:QNSFEWS} reduces to the compressible Navier-Stokes equations with degenerate viscosity. The global existence of weak solutions in two and three dimensions has been obtained in \cite{VY16} and \cite{LX} with vanishing boundary conditions at infinity. Concerning the quantum Navier--Stokes system, the existence of finite energy weak solutions on the torus $\T^d$, for $d=2,3$ has been proved in \cite{AS} and \cite{LV16}, see also \cite{AS15, LZZ}.
Global existence of weak solutions to the isothermal quantum Navier-Stokes equations, i.e. $\gamma=1$, has been proved in \cite{CCH}. The result of \cite{CCH} is proven on $\R^d$ for $d=2,3$ with vanishing boundary conditions at infinity by means of an invading domain approach. Moreover, for some different choices of viscosity and capillarity coefficients the global existence has been proved in \cite{AS19, AS18, BVY19}.
 In general, the analysis of fluids with density dependent viscosity, even without capillary tensor, requires new tools compared to the case of constant viscosity, see Lions-Feireisl theory \cite{L96, FN01} and also \cite{BJ}. This is due to a loss of
control on uniform bounds on the velocity field in vacuum regions. On the other hand, in the case of degenerate viscosities, the Bresch-Desjardins entropy estimate \cite{BD} gives further regularity properties on the mass density, see \eqref{ineq:BDEQNS}. 
When considered with capillary effects, the nonlinear structure of the dispersive tensor depending on the density and its derivatives entails additional mathematical difficulties, see for instance \cite{BD, BDL, BNV16}.

The choice of considering system \eqref{eq:QNSFEWS} with non-trivial conditions at infinity as in \eqref{eq:farfield} is motivated by its applications to the study of singular limits such as the low Mach number limit \cite{AHM18, AHMHyp}.
Moreover, this is also the right framework where it is possible to consider a wide class of special solutions to \eqref{eq:QNSFEWS}, such as traveling waves, solitons, viscous shocks, etc.

We remark that our result of global existence of finite energy weak solutions also applies to the compressible Navier-Stokes equations with degenerate viscosity, namely for $\kappa=0$. To the best of our knowledge, the Cauchy Problem for \eqref{eq:QNSFEWS} with $\kappa\geq 0$ and far-field behavior \eqref{eq:farfield} has not been previously investigated in literature in the class of weak solutions for $d=2,3$. Local strong solutions have been constructed in \cite{GS18, LPZ16,LPZ} for \eqref{eq:QNSFEWS} with $\nu>0$ and $\kappa=0$. For $d=1$, $\nu>0$ and $\kappa=0$, existence and uniqueness of global strong solutions with \eqref{eq:farfield} has been shown in \cite{MV08, H18}.

Let us now comment on the definition of finite energy weak solutions of \eqref{eq:QNSFEWS}. Without further regularity assumptions neither $u$, $\nabla u$ nor $\frac{1}{\sqrt\rho}$ are defined almost everywhere due to the possible presence of vacuum, this is somehow reminiscent to the analysis of the QHD system \cite{AM, AM16}. On the other hand, the quantity $\Lambda:=\sqrt{\rho}\,u$ is well-defined almost everywhere and then the definition of   finite energy weak solutions is given in terms $(\sqrt{\rho}, \Lambda)$. The same difficulty arises in the context of barotropic Navier-Stokes equations with density dependent viscosity, see for instance \cite{LX}.\par

 Moreover, in general it is not possible to infer the following energy inequality
\begin{equation}\label{eq:EIformal}
    E(t)+2\nu \int_0^t\int_{\R^d}\rho\left|\D u\right|^2\dd x\dd t\leq E(0).
\end{equation}
which is replaced by a weaker version. To this end, we define the tensor $\Tv\in L^2(\R\times\R^d)$ satisfying  
\begin{equation}\label{eq:visctensor}
    \sqrt{\nu}\sqrt{\rho}\Tv=\nu\nabla(\rho u)-2\nu \Lambda \otimes\nabla\sqrt{\rho} \qquad \text{in} \quad \mathcal{D}'((0,T)\times \R^d)).
\end{equation}
By denoting $\Sv=\Tv^{sym}$, we see that for smooth solutions we have $\sqrt{\nu}\sqrt{\rho}\Sv=\nu\rho\D u$ and then \eqref{eq:EIformal} reads as \eqref{eq:energyQNS}.\par
Analogously, for the capillary tensor we use the identities 
\begin{equation*}
\frac{1}{2}\rho\nabla\left(\frac{\Delta\sqrt{\rho}}{\sqrt{\rho}}\right)=\frac{1}{4}\nabla\Delta\rho- \diver(\nabla\sqrt{\rho}\otimes\nabla\sqrt{\rho})=\dive\left(\sqrt{\rho}(\nabla^2\sqrt{\rho}-4\nabla\rho^{\frac{1}{4}}\otimes\nabla\rho^{\frac{1}{4}})\right)
\end{equation*}
%
%
%
which turn out to be  well-defined in view of the regularity inferred by the energy and the Bresch-Desjardins entropy estimates. 
 For the sake of consistency with the literature regarding (quantum) Navier-Stokes equations, we do not use the hydrodynamic variable $\Lambda$ in this paper. However, we stress that whenever the symbol $\sqrt{\rho}u$ appears it should be read as $\Lambda$. \par
Our method consists in an invading-domains approach, as in \cite{CCH}, see also \cite{L} and \cite{FN}, relying on the existence of weak solutions on flat torus \cite{LV16}, \cite{AS}, \cite{LZZ} which in addition satisfy a truncated formulation of the momentum equations. More precisely, given initial data of finite energy we construct periodic initial data on a sequence of invading tori. On each of them, \cite{LV16} provides a periodic truncated weak solution. Next, we show that these solutions provide a sequence of approximate truncated weak solutions on the whole space. In the limit, we recover a truncated weak solution to \eqref{eq:QNSFEWS} on the whole space which we show is in particular a finite energy weak solution. 
Contrarily to the aforementioned literature, here we need to overcome the further difficulty given by the non-trivial conditions at infinity for the mass density; this will be dealt with by the  appropriate choice for the internal energy (as already exposed above) and of the spatial cut-offs, see Subsection \ref{sec:periodicdata} below. \par

This paper is structured as follows.
In Section \ref{sec:preQNS}, we give the precise Definition of finite energy weak solution and state our main results. Section \ref{sec:periodic} is dedicated to the construction of suitable periodic initial data on growing tori $\T_n^d$ for which we postulate the existence of finite energy weak solutions based on \cite{LV16}. In particular, the periodic weak solutions satisfy a truncated formulation of the momentum equation that will be crucial for the sequel.
In Section \ref{sec:ext}, the sequence of periodic solutions on invading domains is extended to a sequence of approximate truncated weak solutions on the whole space with non-trivial far-field behavior \eqref{eq:farfield}. 
The limit of the sequence of approximate solutions provides a truncated finite energy weak solution to \eqref{eq:QNSFEWS}. Finally, we present the proof of the main Theorem in Section \ref{sec:weak}, namely we show that the obtained truncated finite energy weak solutions are finite energy weak solutions to \eqref{eq:QNSFEWS}.

\section{Definition and main results}\label{sec:preQNS}
As already mentioned, the strategy for proving the existence Theorem \ref{thm:mainQNS} goes through constructing suitable solutions on domains $\T_n^d=\R^d\slash n\Z^d$ with $n\in \N$. For the sake of generality we give the Definition of finite energy weak solutions for an arbitrary domain $\Omega$, which will be $\Omega=\R^d, \T^d$ or $\Omega=\T_n^d$ respectively according to our purposes.

\begin{definition}\label{defi:FEWS}
A pair $(\rho,u)$ with $\rho\geq 0$ is said to be a finite energy weak solution of the Cauchy Problem \eqref{eq:QNSFEWS} posed on $[0,T)\times \Omega$ complemented with initial data $(\rho^0,u^0)$ if 
\begin{enumerate}[(i)]
\item Integrability conditions
\begin{equation*}
\begin{aligned}
&\sqrt{\rho}\in L_{loc}^{2}((0,T)\times \Omega); \quad 
\sqrt{\rho}u\in L_{loc}^{2}((0,T)\times \Omega); \quad  \quad \nabla\sqrt{\rho}\in L_{loc}^{2}((0,T)\times \Omega); \\
&\nabla\rho^{\frac{\gamma}{2}}\in L_{loc}^{2}((0,T)\times \Omega); \quad \Tv\in L_{loc}^2((0,T)\times \Omega); \quad {\kappa}\nabla^2\sqrt{\rho}\in L_{loc}^{2}((0,T)\times \Omega);\\
&\sqrt{\kappa}\nabla\rho^{\frac{1}{4}}\in L_{loc}^4((0,T)\times\Omega);
\end{aligned}
\end{equation*}
\item Continuity equation
\begin{equation*}
\int_{\Omega}\rho_{0}\phi(0)+\int_0^T\int_{\Omega}\rho \phi_t+\sqrt{\rho}\sqrt{\rho}u\nabla\phi=0,
\end{equation*}
for any $\phi\in C_c^{\infty}([0,T)\times \Omega)$.
\item Momentum equation
\begin{equation*}
\begin{aligned}
&\int_{\Omega}\rho_{0}u_{0}\psi(0)+\int_0^T\int_{\Omega}\sqrt{\rho}\sqrt{\rho}u \psi_t+(\sqrt{\rho}u\otimes\sqrt{\rho}u)\nabla\psi+\rho^{\gamma}\dive \psi\\
&-2\nu \int_0^T\int_{\Omega}\left(\sqrt{\rho}u\otimes \nabla \sqrt{\rho}\right)\nabla \psi-2\nu \int_0^T\int_{\Omega}\left(\nabla\sqrt{\rho}\otimes \sqrt{\rho}u\right)\nabla \psi\\
&+\nu \int_0^T\int_{\Omega}\sqrt{\rho}\sqrt{\rho}u\Delta\psi+\nu \int_0^T\int_{\Omega}\sqrt{\rho}\sqrt{\rho}u\nabla\dive\psi\\
&-4\kappa^2 \int_0^T\int_{\Omega}\left(\nabla\sqrt{\rho}\otimes \nabla\sqrt{\rho}\right)\nabla\psi+2\kappa^2\int_0^T\int_{\Omega}\sqrt{\rho}\nabla\sqrt{\rho}\nabla\dive\psi=0,
\end{aligned}
\end{equation*}
for any $\psi\in C_c^{\infty}([0,T)\times \Omega;\Omega)$.
\item There exists a tensor $\Tv\in L^2((0,T)\times\Omega)$  satisfying identity \eqref{eq:visctensor}, namely
\begin{equation*}
    \sqrt{\nu}\sqrt{\rho}\Tv=\nu\nabla(\rho u)-2\nu\Lambda \otimes \nabla\sqrt{\rho} \qquad \text{in} \quad \mathcal{D}'((0,T)\times \R^d)),
\end{equation*}
such that the following energy inequality holds for a.e. $t\in[0,T]$,
\begin{equation}\label{eq:energyQNS}
E(t)+2 \int_0^t \int_{\Omega}|\Sv|^2\dd x\dd t\leq C E(0),
\end{equation}
where $\Sv$ is the symmetric part of $\Tv$ and $E$ as defined in \eqref{eq:energy}.
\item Let 
\begin{equation*}
B(t)=\int_{\Omega}\frac{1}{2}\left|\sqrt{\rho}u\right|^2+(2\kappa^2+4\nu^2)\left|\nabla\sqrt{\rho}\right|^2+F(\rho)\dd x.
\end{equation*}
Then for a.e. $t\in[0,T]$,
\begin{align}\label{ineq:BDEQNS}
\begin{split}
&B(t)
+\int_0^t\int_{\Omega}\frac{1}{2}\left|\Av\right|^2\dd x \dd s+\nu\kappa^2\int_0^t\int_{\Omega}\left|\nabla^2\sqrt{\rho}\right|^2+|\nabla\rho^{\frac{1}{4}}|^4\dd x\dd s+\nu\int_0^t\int_{\Omega}\left|\nabla\rho^{\frac{\gamma}{2}}\right|^2\dd x\dd s\\
&\leq C \int_{\Omega}\frac{1}{2}\left|\sqrt{\rho^0}u_{0}\right|^2+(2\kappa^2+4\nu)\left|\nabla\sqrt{\rho_{0}}\right|^2+F(\rho^0)\dd x,
\end{split}
\end{align}
where $\Av=\Tv^{asym}$, with $\Tv$ defined as in the previous point.
\end{enumerate}
\end{definition}
Notice that the far-field behavior is encoded in the definition of the energy functional \eqref{eq:energy}.
Our main result states the existence of global finite energy weak solutions to \eqref{eq:QNSFEWS} with non-vanishing density at infinity.

\begin{theorem}\label{thm:mainQNS}
Let $d=2,3$ and $\gamma>1$. Given initial data $(\rho^0,u^0)$ of finite energy, there exists a global finite energy weak solution to \eqref{eq:QNSFEWS} on $\R^d$ satisfying the far-field condition \eqref{eq:farfield}.
\end{theorem}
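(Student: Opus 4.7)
The plan is to follow an invading domains approach, as in \cite{CCH, FN, L}, but adapted to the non-trivial far-field setting. Given finite energy initial data $(\rho^0,u^0)$ on $\R^d$, I will first construct, in Section \ref{sec:periodic}, a sequence of periodic initial data $(\rho^0_n,u^0_n)$ on the growing tori $\T_n^d=\R^d/n\Z^d$ whose energies are uniformly bounded in $n$ and which converge in a suitable sense to $(\rho^0,u^0)$. The key issue that is not present in the vanishing-density-at-infinity setting is that the periodic approximants must match the constant state $1$ outside a large ball while keeping the renormalized internal energy $F(\rho)$ from \eqref{eq:ren_ieQNSFEWS} finite. This is achieved via appropriate spatial cut-offs acting on $\rho^0-1$, as described in the paragraph on $\texttt{periodicdata}$.

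\textbf{Periodic truncated solutions.} On each $\T_n^d$, the existence result for periodic finite energy weak solutions to \eqref{eq:QNSFEWS} from \cite{LV16} (see also \cite{AS, LZZ}) then provides a sequence $(\rho_n,u_n)$. These solutions also satisfy a truncated formulation of the momentum equation, in which the nonlinearities in the velocity are regularized via a truncation operator; this truncated formulation is essential for the subsequent compactness analysis, since it provides the additional integrability needed to identify the convective term $\sqrt{\rho}u\otimes\sqrt{\rho}u$ and the tensors $\Tv,\Sv,\Av$, which are otherwise only controlled through the energy and the Bresch-Desjardins entropy \eqref{ineq:BDEQNS}.

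\textbf{Extension and compactness.} In Section \ref{sec:ext} the periodic solutions $(\rho_n,u_n)$ are extended to the whole space by exploiting the fact that for $n$ large $\T_n^d$ contains any fixed ball, yielding a sequence of approximate truncated weak solutions on $\R^d$ with far-field value $1$. The inequalities \eqref{eq:energyQNS} and \eqref{ineq:BDEQNS}, which hold with constants uniform in $n$ by our choice of approximating data, give uniform bounds on $\sqrt{\rho_n}-1$, $\nabla\sqrt{\rho_n}$, $\sqrt{\rho_n}u_n$, $\Tvn$, $\kappa\nabla^2\sqrt{\rho_n}$ and $\sqrt{\kappa}\nabla\rho_n^{1/4}$. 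Combining these with time-equicontinuity extracted from the equations themselves, I pass to a subsequential limit $(\rho,u)$ solving a truncated weak formulation of \eqref{eq:QNSFEWS} on $\R^d$ with the correct far-field condition.

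\textbf{Main obstacle.} The hard part, carried out in Section \ref{sec:weak}, is to remove the truncation and show that the limit $(\rho,u)$ is actually a finite energy weak solution in the sense of Definition \ref{defi:FEWS}. This requires passing to the limit as the truncation parameter vanishes in each nonlinear term, most delicately in the convective term and in the viscous contribution involving $\Tv$ via the distributional identity \eqref{eq:visctensor}. Since $u$ is not defined a.e.\ in vacuum regions, all identifications must be carried out in terms of the hydrodynamic variables $(\sqrt{\rho},\Lambda)$ with $\Lambda=\sqrt{\rho}u$, using the a priori bounds from the energy and BD estimates. The additional difficulty inherent to the non-trivial far-field setting is to ensure that the cut-off tails used in the construction of $(\rho^0_n,u^0_n)$ — and hence the deviation of $\rho$ from $1$ at large $|x|$ — do not produce boundary-type contributions that obstruct this limit; this is where the careful choice of the renormalized internal energy $F(\rho)$ pays off, as its quadratic behavior near $\rho=1$ provides the decay of $\rho-1$ at infinity needed to close the argument.
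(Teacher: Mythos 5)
Your proposal follows essentially the same route as the paper: cut-off construction of periodic data on invading tori $\T_n^d$ matching the constant state $1$, the truncated periodic solutions of \cite{LV16}, extension to $\R^d$ with uniform energy and BD bounds, passage to the limit in $n$, and finally removal of the truncation to obtain a solution in the sense of Definition \ref{defi:FEWS}. The only difference is one of emphasis: in the paper the $\delta$-limit is comparatively light (the truncation measures have mass $O(\delta)$ and dominated convergence applies), while the main technical work lies in the data construction of Lemma \ref{lem:data} and in Theorem \ref{thm:extensions}, but this does not affect the correctness of your outline.
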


When $\kappa=0$, system \eqref{eq:QNSFEWS} reduces to the compressible Navier-Stokes equations with degenerate viscosity for which we have the following existence result.

\begin{corollary}\label{coro:degenerate}
Let $d=2,3$ and $\kappa=0$, $\gamma>1$. Given initial data $(\rho^0,u^0)$ of finite energy and BD entropy, there exists a global finite energy weak solution to \eqref{eq:QNSFEWS} on $\R^d$ satisfying the far field condition \eqref{eq:farfield}.
\end{corollary}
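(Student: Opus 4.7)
The plan is to deduce the corollary from Theorem \ref{thm:mainQNS} by a vanishing capillarity argument. I let $\kappa_n = 1/n \to 0$ and pick a mollified sequence of initial data $(\rho_n^0, u_n^0)$ converging to $(\rho^0, u^0)$ for which the energy
$\int \tfrac{1}{2}\rho_n^0 |u_n^0|^2 + 2\kappa_n^2 |\nabla \sqrt{\rho_n^0}|^2 + F(\rho_n^0) \dd x$ and the BD entropy remain uniformly bounded in $n$ (possible because the initial BD entropy is assumed finite and $\kappa_n \to 0$). For each $n$, Theorem \ref{thm:mainQNS} yields a finite energy weak solution $(\rho_n, u_n)$ on $\R^d$ to \eqref{eq:QNSFEWS} with capillarity coefficient $\kappa_n$ and satisfying the far-field condition \eqref{eq:farfield}.

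The key structural observation is that the BD entropy inequality \eqref{ineq:BDEQNS} controls $(2\kappa^2 + 4\nu^2)|\nabla\sqrt{\rho}|^2$, whose coefficient $4\nu^2$ does \emph{not} depend on $\kappa$. Combining this with the energy estimate \eqref{eq:energyQNS} and the BD dissipation, I obtain uniform-in-$n$ bounds on $\nabla\sqrt{\rho_n}$ in $L^\infty_t L^2_x$, on $\sqrt{\rho_n}u_n$ in $L^\infty_t L^2_x$, on $\Tvn$ in $L^2_{t,x}$, on $\nabla \rho_n^{\gamma/2}$ in $L^2_{t,x}$, and on $F(\rho_n)$ in $L^\infty_t L^1_x$. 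The terms in the momentum equation carrying $\kappa_n^2$ prefactors (the capillary integrals) are bounded by $\kappa_n^2$ times quantities only controlled modulo a factor of $\kappa_n$, hence vanish as distributions when $n\to\infty$.

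Passage to the limit then follows the compactness scheme already used in Section \ref{sec:weak}: the BD entropy bound gives strong compactness of $\sqrt{\rho_n}$ in $L^p_{loc}$ by Aubin-Lions applied to the continuity equation, the convergence of $\rho_n$ to $1$ at infinity is preserved through $F(\rho_n)$, and strong $L^2_{loc}$ convergence of $\sqrt{\rho_n}u_n$ (needed for the convective term) is obtained via the Mellet--Vasseur type estimate on $\rho_n |u_n|^{2+\delta}$ adapted to the far-field setting, as in \cite{VY16, LX, CCH}. The tensor $\Tv$ arises as the weak-$L^2$ limit of $\Tvn$ and satisfies \eqref{eq:visctensor} via the identity $\sqrt{\nu}\sqrt{\rho_n}\Tvn = \nu \nabla(\rho_n u_n) - 2\nu \Lambda_n \otimes \nabla\sqrt{\rho_n}$ passed to the distributional limit; the energy and BD inequalities survive by weak lower semicontinuity.

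The principal obstacle I anticipate is the strong convergence of $\sqrt{\rho_n}u_n$: without capillarity we lose the $\kappa^2 \|\nabla^2\sqrt{\rho}\|_{L^2}^2$ control that was available in the proof of Theorem \ref{thm:mainQNS}, and $\nabla u_n$ is not defined in vacuum regions. This is precisely the issue resolved by the Mellet--Vasseur estimate, which must now be adapted so that one tests against $(1 + |\sqrt{\rho_n}u_n|^2)\log(1 + |\sqrt{\rho_n}u_n|^2)$ while handling the non-vanishing density at infinity; the required smallness of integrals over $\{|x| > R\}$ uniformly in $n$ is supplied by the uniform equi-integrability of $F(\rho_n)$ and $\rho_n|u_n|^2$ granted by the construction of the approximating initial data.
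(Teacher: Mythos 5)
Your route (fix the data, let $\kappa_n=1/n$, apply Theorem \ref{thm:mainQNS} for each $\kappa_n>0$ on the whole space, and pass to the vanishing-capillarity limit in the weak formulation) stands or falls on the strong $L^2_{loc}$ convergence of $\sqrt{\rho_n}u_n$, and the step you invoke for it is not available. The Mellet--Vasseur estimate cannot be derived \emph{a posteriori} for the solutions produced by Theorem \ref{thm:mainQNS}: they are only finite energy weak solutions in the sense of Definition \ref{defi:FEWS}, whose momentum equation may be tested against smooth compactly supported $\psi$ only; testing against $(1+|u|^2)\log(1+|u|^2)u$ is not an admissible operation since $u$ and $\nabla u$ are not even defined on the vacuum set and no renormalization property is part of the definition. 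Nor is such a bound propagated, uniformly in $\kappa$, through the paper's construction: it is well known (and is precisely the reason \cite{LV16,VY16} introduce drag terms and the truncated formulation, which this paper inherits) that the capillary tensor obstructs the Mellet--Vasseur inequality even at the level of the approximating solutions. So the convective term $\int(\sqrt{\rho_n}u_n\otimes\sqrt{\rho_n}u_n)\nabla\psi$ cannot be passed to the limit by your argument; weak convergence of $\sqrt{\rho_n}u_n$ plus equi-integrability of $F(\rho_n)$ does not suffice. (The vanishing of the $\kappa_n^2$ capillary integrals and the uniform energy/BD bounds in your first two paragraphs are fine; the gap is only, but crucially, the compactness of the momentum flux. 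A secondary issue is that the identification of $\Tv$ through \eqref{eq:visctensor} also needs strong convergence of $\nabla\sqrt{\rho_n}$ or of $\Lambda_n$, neither of which is supplied once the uniform $\kappa^2\|\nabla^2\sqrt{\rho_n}\|_{L^2}^2$ bound is lost.)

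The paper avoids this entirely by performing the vanishing-capillarity limit \emph{inside} the invading-domains scheme rather than after it: $\kappa_n=1/n$ is built into \eqref{eq:kappan}, Theorem \ref{thm:FEWStorus} and Theorem \ref{thm:extensions} are run with this $\kappa_n$, and the limit object is first only a \emph{truncated} finite energy weak solution. In the truncated momentum equation \eqref{eq:MEtrunc} the nonlinearity appears as $\rho_n u_n\bdl(u_n)$ with $\bdl$ bounded, so its convergence needs only a.e. convergence of $\rho_n, m_n$ plus Vitali (Lemma \ref{lem:convergenceQNSren}) --- no Mellet--Vasseur estimate --- while the capillary contribution enters as $\kappa_n\sqrt{\tilde\rho_n}\widetilde{\Skn}$ with $\widetilde{\Skn}$ uniformly bounded in $L^2$, hence vanishes as $\kappa_n\to0$. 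The weak solution is then recovered in the $\delta$-limit, using the measure bound $\|\mu_\beta\|_{\mathcal M}+\|\overline{\mu}_\beta\|_{\mathcal M}\le C\delta$. If you want to salvage your strategy, you should not start from the bare statement of Theorem \ref{thm:mainQNS} but from the truncated formulation available before the $\delta$-limit, i.e.\ essentially reproduce the paper's argument with $\kappa_n=1/n$.
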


The statement of Theorem \ref{thm:mainQNS} remains true if we consider system \eqref{eq:QNSFEWS} on $\R^d$ with trivial far-field behavior, i.e. 
\begin{equation*}
    \rho\rightarrow 0, \quad \text{as} \quad |x|\rightarrow \infty.
\end{equation*}
In this case, the internal energy does not need to be renormalized, namely \eqref{eq:renenergyQNS} is substituted by
\begin{equation}\label{eq:energygammalaw}
    F(\rho)=\frac{1}{\gamma-1}\rho^{\gamma}.
\end{equation}
The following is the analogue result on the whole space to \cite{AS, LV16} for $\kappa>0$ and \cite{LX, VY16} for $\kappa=0$ respectively. 
\begin{theorem}\label{thm:mainQNS2}
Let $d=2,3$ and $\nu>0,\kappa\geq 0$ and $\gamma>1$. Given initial data $(\rho^0,u^0)$ of finite energy and BD entropy, there exists a global finite energy weak solution to \eqref{eq:QNSFEWS} on $\R^d$ with trivial far-field behavior.
\end{theorem}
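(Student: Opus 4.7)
The plan is to follow the same invading-domains strategy developed for Theorem \ref{thm:mainQNS}, with substantial simplifications coming from the trivial far-field condition. Since $\rho \to 0$ at infinity, the internal energy $F(\rho)=\rho^{\gamma}/(\gamma-1)$ is integrable on $\R^d$ without the renormalisation \eqref{eq:ren_ieQNSFEWS}, and no subtraction of constants is needed when truncating at large radii.

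First I would construct periodic initial data on the growing tori $\T_n^d=\R^d/n\Z^d$: multiply $\rho^0$ by a radial cut-off supported well inside a cube of side $n$, mollify if necessary, and periodise; a parallel construction provides a periodic momentum field approximating $\rho^0 u^0$. Since $F(\rho^0)\in L^1(\R^d)$ and the BD entropy is finite by assumption, the energies and BD entropies of the approximations are uniformly bounded in $n$ and converge to those of $(\rho^0,u^0)$. On each $\T_n^d$ I would then invoke the existing periodic existence results, namely \cite{LV16, AS} for $\kappa>0$ and \cite{VY16, LX} for $\kappa=0$, to obtain a finite energy weak solution $(\rho_n,u_n)$ satisfying in addition a truncated formulation of the momentum equation.

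Next I would identify each $\T_n^d$ with a cube in $\R^d$ and view the sequence as defined on the whole space, extending $\rho_n$ by zero outside (consistent with the trivial far-field). The uniform bounds coming from the energy inequality \eqref{eq:energyQNS} and the BD entropy \eqref{ineq:BDEQNS} yield, up to a subsequence, $\sqrt{\rho_n}\to\sqrt{\rho}$ strongly in $L^2_{loc}$ and a.e.\ (via Aubin--Lions using $\nabla^2\sqrt{\rho_n}\in L^2$), $\sqrt{\rho_n}u_n\rightharpoonup \Lambda$ weakly in $L^2_{loc}$, together with analogous convergences for $\nabla\rho_n^{\gamma/2}$, $\nabla\rho_n^{1/4}$ and the viscous tensor $\Tv$. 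Passing to the limit in the truncated momentum equation and subsequently removing the truncation, exactly as in Sections \ref{sec:ext}--\ref{sec:weak}, produces a finite energy weak solution in the sense of Definition \ref{defi:FEWS}, with \eqref{eq:energyQNS} and \eqref{ineq:BDEQNS} inherited by lower semicontinuity.

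The main obstacle is the compactness required to pass to the limit in the convective term $\sqrt{\rho_n}u_n\otimes\sqrt{\rho_n}u_n$ and in the quadratic capillary terms, which is handled, as for Theorem \ref{thm:mainQNS}, by the truncation procedure that isolates the vacuum combined with time regularity of $\rho_n$ extracted from the continuity equation. Past this point, the argument is in fact strictly simpler than for Theorem \ref{thm:mainQNS}, since no renormalisation of $F$ is needed and the spatial cut-offs can be chosen in a standard way rather than tailored to a non-trivial far-field.
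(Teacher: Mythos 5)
Your proposal is correct and follows essentially the same route as the paper: cut-off construction of periodic data on the invading tori $\T_n^d$, the periodic truncated solutions of \cite{LV16}, extension by the trivial state $(\rho,u)=(0,0)$ outside $[-n,n]^d$, the uniform energy/BD bounds and compactness to pass to the limit, and finally the $\delta$-limit removing the truncation, all simplified by the unrenormalised internal energy \eqref{eq:energygammalaw}. The only point where you diverge is the case $\kappa=0$: the paper does not invoke \cite{VY16,LX} on the torus directly (which would import the restrictions on $\gamma$ for $d=3$ present in \cite{LX}), but instead keeps the quantum term with vanishing capillarity $\kappa_n=1/n$ as in \eqref{eq:kappan} and lets it disappear in the limit, exactly as in Corollary \ref{coro:degenerate}.
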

Notice that \cite{LX} provides an existence result for the compressible Navier-Stokes with degenerate visocosity, i.e. $\kappa=0$ on the whole space with trivial far-field for a generalized viscosity tensor but with restrictions on the range of $\gamma$ for $d=3$. For $\gamma=1$ and $\kappa\geq 0$ existence has been proven in \cite{CCH}.

\subsection{Truncation functions}
We introduce the truncation functions by means of which we shall construct the aforementioned solutions to the truncated formulation of \eqref{eq:QNSFEWS}.

\begin{definition}\label{defi:truncation}
Let $\overline{\beta}:\R\rightarrow\R$ be an even positive compactly supported smooth function such that $\overline{\beta}(z)=1$ for $z\in[-1,1]$ and $supp(\overline{\beta})\subset(-2,2)$ and $0\leq\overline{\beta}\leq 1$. Further, we define $\tilde{\beta}:\R\rightarrow \R$ as 
\begin{equation*}
    \tilde{\beta}(z)=\int_0^z\overline{\beta}(s)\dd s.
\end{equation*}
For any $\delta>0$, we define $\overline{\beta}_{\delta}(z):=\overline{\beta}(\delta z)$ and $\Tilde{\beta}_{\delta}(z):=\frac{1}{\delta}\Tilde{\beta}(\delta z)$.
Given $y\in \R^3$, we denote 
\begin{equation*}
    \hat{\beta}_{\delta}(y):=\prod_{l=1}^3\overline{\beta}_{\delta}(y_l),
\end{equation*}
further
\begin{equation*}
    \beta_{\delta}^1(y)=\int_0^{y_1}\hat{\beta}_{\delta}(y_1',y_2,y_3)\dd y_1'=\Tilde{\beta}_{\delta}(y_1)\overline{\beta}_{\delta}(y_2)\overline{\beta}_{\delta}(y_3).
\end{equation*}
The functions $ \beta_{\delta}^2(y),  \beta_{\delta}^3(y)$ are defined analogously. 

\end{definition}
We summarize some properties of the truncation functions introduced in Definition \ref{defi:truncation}.
\begin{lemma}\label{lem:truncation}
Let $\delta>0$,  $\beta_\delta^l$ as in Definition \ref{defi:truncation} and $M:=\|\beta\|_{W^{2,\infty}}$. Then, there exists $C=C(M)>0$ such that the following bounds hold.
\begin{enumerate}
    \item for $1\leq l \leq d$,
    \begin{equation*}
        \|\beta_\delta^l\|_{L^{\infty}}\leq \frac{C}{\delta}, \qquad \|\nabla\beta_\delta^l \|_{L^{\infty}}\leq C, \qquad \|\nabla^2\beta_\delta^l \|_{L^{\infty}}\leq C \delta,
    \end{equation*}
    \item as $\delta$ goes to $0$, for every $y\in \R^d$ and any $1\leq l \leq d$,
    \begin{equation*}
        \beta_\delta^l(y)\rightarrow y_l, \qquad \nabla\beta_{\delta}^{l}(y)\rightarrow e_l,
    \end{equation*}
    where $e_l\in \R^d$ such that $e_l^i=1$ for $i=l$ and $e_l^i=0$ otherwise. 
\end{enumerate}
\end{lemma}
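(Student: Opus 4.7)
The approach is direct computation from the explicit product representation
\[
\beta_\delta^l(y) = \tilde{\beta}_\delta(y_l) \prod_{k \neq l} \overline{\beta}_\delta(y_k),
\]
combined with careful bookkeeping of the $\delta$-scaling of the one-dimensional building blocks. First I would record the scaling of $\overline{\beta}_\delta$: since $\overline{\beta}_\delta = \overline{\beta}(\delta \,\cdot\,)$, the chain rule gives $\|\overline{\beta}_\delta\|_{L^\infty} \leq M$, $\|\overline{\beta}_\delta'\|_{L^\infty} \leq \delta M$ and $\|\overline{\beta}_\delta''\|_{L^\infty} \leq \delta^2 M$. For the antiderivative, because $\overline{\beta}$ is supported in $(-2,2)$ with $0 \leq \overline{\beta} \leq 1$, we have $|\tilde{\beta}(z)| \leq \|\overline{\beta}\|_{L^1} \leq 2$ uniformly in $z$, whence $\|\tilde{\beta}_\delta\|_{L^\infty} = \frac{1}{\delta}\|\tilde{\beta}\|_{L^\infty} \leq C/\delta$. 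Derivatives of $\tilde{\beta}_\delta$ are $\tilde{\beta}_\delta' = \overline{\beta}_\delta$ and $\tilde{\beta}_\delta'' = \overline{\beta}_\delta'$, so they obey the same scaling as the corresponding derivatives of $\overline{\beta}_\delta$.

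With these building blocks in hand, the three bounds in (1) follow from the Leibniz rule applied to the product $\tilde{\beta}_\delta(y_l)\prod_{k \neq l}\overline{\beta}_\delta(y_k)$. The key observation is a simple accounting principle: the unique $\tilde{\beta}_\delta$ factor carries an $L^\infty$ size of order $1/\delta$, while each $\overline{\beta}_\delta$ factor is of size $O(1)$; a derivative in direction $l$ removes the $1/\delta$ by replacing $\tilde{\beta}_\delta$ with $\overline{\beta}_\delta$, whereas a derivative in any perpendicular direction $k\neq l$ produces a factor $\delta$ from the chain rule on $\overline{\beta}_\delta$. Consequently first derivatives are $O(1)$ and second derivatives are $O(\delta)$, matching exactly the three displayed inequalities.

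For (2), I would Taylor-expand around the origin at fixed $y$. For each $k\neq l$, $\overline{\beta}_\delta(y_k) = \overline{\beta}(0) + O(\delta y_k) = 1 + O(\delta)$, and for the antiderivative
\[
\tilde{\beta}_\delta(y_l) = \frac{1}{\delta}\tilde{\beta}(\delta y_l) = \frac{1}{\delta}\bigl(0 + \overline{\beta}(0)\,\delta y_l + O(\delta^2 y_l^2)\bigr) = y_l + O(\delta),
\]
so multiplying these expansions gives $\beta_\delta^l(y) \to y_l$. The same expansion applied to $\partial_l \beta_\delta^l = \overline{\beta}_\delta(y_l)\prod_{k\neq l}\overline{\beta}_\delta(y_k)$ yields the limit $1$, while for $j\neq l$ the partial derivative is $\tilde{\beta}_\delta(y_l)\,\delta\overline{\beta}'(\delta y_j)\prod_{k\neq l,j}\overline{\beta}_\delta(y_k)$, which vanishes thanks to the explicit prefactor $\delta$; together this gives $\nabla\beta_\delta^l(y) \to e_l$.

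The entire argument is routine. The only point requiring a modest amount of care is ensuring that the $1/\delta$ blow-up coming from $\tilde{\beta}_\delta$ is correctly compensated by the $\delta$ suppression produced by chain-rule differentiation of $\overline{\beta}_\delta$; no substantive analytic obstacle arises.
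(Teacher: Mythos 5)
Your proof is correct. The paper in fact states Lemma \ref{lem:truncation} without proof, treating it as elementary, and your direct computation --- recording the scalings $\|\overline{\beta}_\delta\|_{L^\infty}\leq M$, $\|\overline{\beta}_\delta'\|_{L^\infty}\leq \delta M$, $\|\overline{\beta}_\delta''\|_{L^\infty}\leq \delta^2 M$, $\|\tilde{\beta}_\delta\|_{L^\infty}\leq C/\delta$, $\tilde{\beta}_\delta'=\overline{\beta}_\delta$, $\tilde{\beta}_\delta''=\overline{\beta}_\delta'$, and then applying the Leibniz rule to the product representation --- is precisely the standard argument one would supply, and all the exponent bookkeeping checks out (every second derivative, pure or mixed, comes out $O(\delta)$). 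One small point worth making explicit in part (2): for $j\neq l$ the term $\tilde{\beta}_\delta(y_l)\,\delta\overline{\beta}'(\delta y_j)\prod_{k\neq l,j}\overline{\beta}_\delta(y_k)$ does not vanish merely because of the prefactor $\delta$, since the uniform bound $\|\tilde{\beta}_\delta\|_{L^\infty}\leq C/\delta$ alone would only give $O(1)$; you also need that at fixed $y$ one has $\tilde{\beta}_\delta(y_l)=y_l+O(\delta)$ (or simply that $\overline{\beta}'(\delta y_j)=0$ once $\delta|y_j|\leq 1$, since $\overline{\beta}\equiv 1$ on $[-1,1]$), which your own expansion in the same paragraph already provides, so the argument closes.
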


\section{Solutions on periodic domains}\label{sec:periodic}
\subsection{Construction of periodic initial data}\label{sec:periodicdata}
Given initial data of finite energy $(\sqrt{\rho^0},\sqrt{\rho^0}u^0)$ for the problem \eqref{eq:QNSFEWS} posed on $\R^d$ with far-field boundary conditions \eqref{eq:farfield}, we construct a sequence of initial data to the periodic problem on $\T_n^d$.
To that end, we consider a smooth cut-off function $\eta\in C_c^{\infty}(\R^d)$ with $supp(\eta)\subset [-1,1]^d$ such that 
\begin{equation}\label{eq:eta}
    \mathbf{1}_{[-\frac{1}{2},\frac{1}{2}]^d}\leq \eta\leq  \mathbf{1}_{[-1,1]^d}, \qquad \text{and} \quad \eta_n(x)=\eta\left(\frac{x}{n}\right).
\end{equation}
We denote $\Qn=[-n,n]^d\backslash(-\frac{n}{2},\frac{n}{2})$ and notice that $supp(\nabla\eta_n)\subset \Qn$ as well as $\|\nabla\eta_n\|_{L^{\infty}(\R^d)}=O(\frac{1}{n})$. 
We define the sequence of initial data $(\sqrt{\rho_n^0},\sqrt{\rho_n^0}u_n^0)$ on $\T_n^d$ as follows
\begin{equation}\label{eq:initialdata}
    \sqrt{\rho_n^0}=\sqrt{\rho^0}\eta_n+(1-\eta_n), \qquad \sqrt{\rho_n^0}u_n^0=\sqrt{\rho^0}u^0\mathbf{1}_{(-n,n)^d}.
\end{equation}

We observe that due to the choice of the cut-off functions, the sequence of initial data $(\sqrt{\rho_n^0}, \Lambda_n^0)$ can be extended to periodic functions and may therefore be considered as functions defined on $\T_n^d$. A similar construction has recently been used in \cite{CCH} for the existence of weak solutions to isothermal fluids. In the present setting, we additionally need to take into account the non-vanishing conditions at infinity leading to a lack of integrability.  We collect some of its properties.
For that purpose, we recall the uniform estimates satisfied by the initial data $(\sqrt{\rho^0},\Lambda^0)$ on $\R^d$, see also \cite{AHM18} for more details. For a generic pair $(\sqrt{\rho},\sqrt{\rho}u)$ of finite energy we have the following bounds.

\begin{lemma}\label{lem:dataQNS}
Let $(\sqrt{\rho},\sqrt{\rho}u)$ be such that $E(\sqrt{\rho},\sqrt{\rho}u)<+\infty$. Then 
\begin{equation*}
    \sqrt{\rho}-1\in H^1(\R^d), \quad \rho-1\in L_{2}^{\gamma}(\R^d)\cap L^2(\R^d), \quad \sqrt{\rho}u\in L^2(\R^d).
\end{equation*}
\end{lemma}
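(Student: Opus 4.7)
The three claimed inclusions follow by examining each contribution to $E(\sqrt{\rho},\sqrt{\rho}u)$ separately. Two of them are immediate: the kinetic energy term $\int \tfrac{1}{2}\rho|u|^2 \, dx$ gives $\sqrt{\rho}u \in L^2(\R^d)$, and the $2\kappa^2|\nabla\sqrt{\rho}|^2$ term gives $\nabla\sqrt{\rho} \in L^2(\R^d)$. The content of the lemma is therefore to convert integrability of the renormalized internal energy $F(\rho) = (\rho^{\gamma} - 1 - \gamma(\rho - 1))/(\gamma(\gamma-1))$ into the desired information on $\sqrt{\rho} - 1$ and $\rho - 1$.

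The key observation is that $F$ is strictly convex on $(0,\infty)$ with $F(1) = F'(1) = 0$ and $F''(1) = 1$, and that it has two distinct asymptotic regimes: quadratic near $\rho = 1$ and $\rho^\gamma$ growth at infinity. To exploit this I would split $\R^d$ into three regions $A_0 = \{\rho \le 1/2\}$, $A_1 = \{1/2 < \rho < 3/2\}$, $A_2 = \{\rho \ge 3/2\}$, and argue as follows. On $A_1$, a Taylor expansion gives $F(\rho) \ge c(\rho-1)^2$, while the identity $\sqrt{\rho} - 1 = (\rho-1)/(\sqrt{\rho}+1)$ yields $(\sqrt{\rho}-1)^2 \le (\rho-1)^2$, so $\int_{A_1}(\sqrt{\rho}-1)^2 \lesssim E$. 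On $A_0$, $F$ attains a positive lower bound $c_0 := \min_{[0,1/2]} F > 0$, hence $|A_0| \le E/c_0 < \infty$; since $(\sqrt{\rho}-1)^2 \le 1$ on $A_0$, this set contributes a finite amount. On $A_2$, $F(\rho) \gtrsim \rho^{\gamma} \gtrsim \rho$ for $\rho \ge 3/2$ (using $\gamma > 1$) while $(\sqrt{\rho}-1)^2 \le \rho$ for $\rho \ge 1$, giving control there as well. Combining the three regions yields $\sqrt{\rho} - 1 \in L^2(\R^d)$, and coupled with $\nabla\sqrt{\rho} \in L^2(\R^d)$ this is precisely $\sqrt{\rho} - 1 \in H^1(\R^d)$.

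To deduce $\rho - 1 \in L^2(\R^d)$, I would then use the factorization $\rho - 1 = (\sqrt{\rho} - 1)(\sqrt{\rho} + 1) = 2(\sqrt{\rho} - 1) + (\sqrt{\rho} - 1)^2$, and invoke the Sobolev embedding $H^1(\R^d) \hookrightarrow L^4(\R^d)$ for $d = 2,3$, which turns $\sqrt{\rho}-1 \in H^1$ into $(\sqrt{\rho}-1)^2 \in L^2$. The membership $\rho - 1 \in L_2^{\gamma}(\R^d)$ (understood as $L^2$ for the part where $|\rho - 1| \le 1/2$ and $L^{\gamma}$ for the part where $|\rho - 1| > 1/2$) follows from the same three-region split: on $A_0 \cup A_1$ we already control $(\rho-1)$ in $L^2$ from the Taylor/finite-measure argument, and on $A_2$ the bound $F(\rho) \gtrsim \rho^{\gamma} \gtrsim (\rho-1)^{\gamma}$ gives the $L^\gamma$ tail.

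I do not expect a substantial obstacle: the only point that requires genuine care is the vacuum region $A_0$, where $F$ degenerates (it stays bounded) and one must convert this into a finite-measure estimate rather than a pointwise bound on $\rho - 1$. The use of Sobolev embedding in the last step is harmless since the statement is restricted to $d = 2, 3$ for which $H^1 \hookrightarrow L^4$ holds.
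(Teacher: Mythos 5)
Your proposal is correct, and all the estimates you invoke do hold: $F''(\rho)=\rho^{\gamma-2}$ is bounded below on $[1/2,3/2]$, giving $F(\rho)\gtrsim(\rho-1)^2$ there; $F$ is decreasing on $[0,1]$ with $F(1/2)>0$, which yields the finite-measure bound on the near-vacuum set; and $F(\rho)\gtrsim\rho^\gamma\gtrsim\rho$ for $\rho\ge 3/2$ together with $(\sqrt{\rho}-1)^2\le\rho$ handles the large-density region, so $\sqrt{\rho}-1\in L^2$ and hence $H^1$, after which the identity $\rho-1=2(\sqrt{\rho}-1)+(\sqrt{\rho}-1)^2$ and $H^1(\R^d)\hookrightarrow L^4(\R^d)$ for $d=2,3$ give $\rho-1\in L^2$, and the same region-by-region bounds give the Orlicz estimate. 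The route differs from the paper's in its organization rather than its substance: the paper first obtains the Orlicz bound $\rho-1\in L_2^\gamma$ by citing a convexity argument (Appendix A of [AHM18], Appendix E of [LM98]) and then distinguishes $\gamma\ge 2$, where $\rho-1\in L^2$ follows at once, from $\gamma<2$, where it uses the finite measure of $\{|\rho-1|>1/2\}$, deduces $\sqrt{\rho}-1\in H^1$, and recovers $\rho-1\in L^2$ from the pointwise inequality $(s^2-1)^2\le c(s-1)^4$ on that set together with the $L^4$ control of $\sqrt{\rho}-1$. Your version proves the needed lower bounds on $F$ from scratch (making the cited convexity lemma unnecessary), avoids the case distinction in $\gamma$ by establishing $\sqrt{\rho}-1\in L^2$ first for every $\gamma>1$, and replaces the inequality $(s^2-1)^2\lesssim(s-1)^4$ by the cleaner algebraic identity for $\rho-1$; both arguments ultimately rest on the same two ingredients, the two-regime behavior of the renormalized internal energy and the embedding $H^1\hookrightarrow L^4$ in dimension $d\le 3$. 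One caveat, shared with the paper's own proof: extracting $\nabla\sqrt{\rho}\in L^2$ from the energy uses $\kappa>0$; for $\kappa=0$ this bound must instead come from the BD entropy assumption, as in Corollary \ref{coro:degenerate}.
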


\begin{proof}
The bound $E(\sqrt{\rho},\sqrt{\rho}u)<+\infty$ implies $\nabla\sqrt{\rho}\in L^2(\R^d)$. By a convexity argument, see Appendix A in \cite{AHM18} or also Appendix E in \cite{LM98}, the bound $F(\rho)\in L^1(\R^d)$ yields the estimate in the Orlicz space  $\rho-1\in L_2^{\gamma}(\R^d)$, i.e. there exists $C>0$ such that
\begin{equation*}
    \int_{\R^d}|\rho-1|^2\mathbf{1}_{\{|\rho-1|\leq\frac{1}{2}\}}+|\rho-1|^\gamma\mathbf{1}_{\{|\rho-1|>\frac{1}{2}\}}\dd x\leq C.
\end{equation*}
If $\gamma\geq 2$, this implies $\rho-1\in L^2(\R^d)$. The inequality $|\sqrt{\rho}-1|\leq |\rho-1|$ yields $\sqrt{\rho}-1\in L_2^{\gamma}(\R^d)$. If $\gamma<2$, we notice that the set $\{|\rho-1|>\frac{1}{2}\}$ is of finite Lebesgue measure and $\nabla\sqrt{\rho}\in L^2(\R^d)$ implies $\sqrt{\rho}-1\in L_{loc}^2(\R^d)$. Hence, there exists $C_1>0$ such that
\begin{equation*}
     \int_{\R^d}|\sqrt{\rho}-1|^2\mathbf{1}_{\{|{\rho}-1|>\frac{1}{2}\}}\dd x\leq C_1,
\end{equation*}
therefore $\sqrt{\rho}-1\in H^1(\R^d)$. For real $s\geq 0$ such that $|s^2-1|>\frac{1}{2}$, there exists $c>0$ such that $(s^2-1)^2\leq c (s-1)^4$ implying that there exists $C_2>0$, such that
\begin{equation*}
    \int_{\R^d}|\rho-1|^2\mathbf{1}_{\{|\rho-1|>\frac{1}{2}\}}\dd x\leq \int_{\R^d}|\sqrt{\rho}-1|^4\mathbf{1}_{\{|{\rho}-1|>\frac{1}{2}\}}\dd x \leq C_2.
\end{equation*}
Hence, we conclude $\rho-1\in L^2(\R^d)$. The remaining bounds are immediate consequences of the finite energy assumption.
\end{proof}

Next, we infer uniform bounds (in $n$) for the sequence of periodic initial data defined in \eqref{eq:initialdata}. 

\begin{lemma}\label{lem:data}
Given $(\sqrt{\rho^0},\sqrt{\rho^0}u^0)$ such that $E_{\R^d}(\sqrt{\rho^0},\sqrt{\rho^0}u^0)<\infty$, the sequence of initial data $(\sqrt{\rho_n^0},\sqrt{\rho_n^0}u_n^0)$ defined by \eqref{eq:initialdata} satisfies the following. There exists an absolute constant $C>0$ such that denoting $\rho_n^0=(\sqrt{\rho_{n}^0})^2$, one has
\begin{equation}\label{eq:energyperiodicdata}
\int_{\T_n^d}\frac12 \left|\nabla\sqrt{\rho_n^0}\right|^2+\frac12\left|\sqrt{\rho_n^0}u_n^0\right|^2+\frac{1}{\gamma-1}\rho_n^{0,\gamma}\dd x\leq C n^d,
\end{equation}
Further,
\begin{align}
\lim_{n\rightarrow \infty}\int_{\T_n^d}\frac{1}{2} \left|\sqrt{\rho_n^0}-1\right|^2&= \int_{\R^d}\frac{1}{2}|\sqrt{\rho^0}-1|^2\dd x, \label{eq:datal2}\\
\lim_{n\rightarrow \infty}\int_{\T_n^d}\frac{1}{2} \left|\nabla\sqrt{\rho_n^0}\right|^2&= \int_{\R^d}\frac{1}{2}|\nabla\sqrt{\rho^0}|^2\dd x,\label{eq:grad}\\
\lim_{n\rightarrow \infty}\int_{\T_n^d} F(\rho_{n}^0)&=\int_{\R^d}F(\rho^0)\dd x, \label{eq:renenergyQNS}\\
\lim_{n\rightarrow \infty}\int_{\T_n^d}\frac12 \rho_n^0\left|u_n^0\right|^2&= \int_{\R^d}\frac{1}{2}\rho^0|u^0|^2\dd x,\label{eq:Lambda}
    \end{align}

In particular, there exists $C>0$ such that $E_{T_n^d}(\sqrt{\rho_n^0},\sqrt{\rho_n^0}u_n^0)\leq C$ uniformly in $n$, with $E$ as defined in \eqref{eq:energy} and
\begin{equation*}
    \lim_{n\rightarrow\infty}E_{T_n^d}\left(\sqrt{\rho_n^0},\sqrt{\rho_n^0}u_n^0\right)=E(\sqrt{\rho^0},\sqrt{\rho^0}u^0).
\end{equation*}
\end{lemma}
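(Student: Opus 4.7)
The plan is to exploit the explicit decomposition $\sqrt{\rho_n^0} = 1 + (\sqrt{\rho^0}-1)\eta_n$ together with the integrability properties furnished by Lemma \ref{lem:dataQNS}, thereby reducing each of the six assertions to a dominated convergence argument on the fundamental domain of $\T_n^d$. The whole argument will rest on two ingredients: the cutoff bounds $0 \le \eta_n \le 1$ and $\|\nabla \eta_n\|_{L^\infty} = O(1/n)$, and a pointwise monotonicity inequality $F(\rho_n^0) \le F(\rho^0)$ which will provide an $L^1(\R^d)$-dominant for the internal energy.

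The first step I would carry out is the pointwise bound $F(\rho_n^0) \le F(\rho^0)$ on $\R^d$. Since $F$ is convex with $F(1)=F'(1)=0$, it is strictly decreasing on $[0,1]$ and strictly increasing on $[1,\infty)$. Because $\sqrt{\rho_n^0}$ is a convex combination of $1$ and $\sqrt{\rho^0}$ at every point, $\rho_n^0(x)$ lies in the closed interval with endpoints $1$ and $\rho^0(x)$, so the bound follows from monotonicity in both cases $\rho^0 \ge 1$ and $\rho^0<1$. This will be the only mildly non-routine step in the whole argument.

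With this in hand, the energy bound \eqref{eq:energyperiodicdata} would follow by expanding $\rho_n^{0,\gamma} = 1 + \gamma(\rho_n^0-1) + \gamma(\gamma-1) F(\rho_n^0)$: the constant integrates to $|\T_n^d| = O(n^d)$, the $F$-term is uniformly bounded by $\int_{\R^d} F(\rho^0) < \infty$, and Cauchy--Schwarz controls the linear term using that $\rho_n^0-1$ is supported in the cube $[-n,n]^d$ while $\sqrt{\rho^0}-1\in L^2(\R^d)$ by Lemma \ref{lem:dataQNS}. The gradient piece is uniformly bounded by expanding $\nabla \sqrt{\rho_n^0} = \eta_n \nabla\sqrt{\rho^0} + (\sqrt{\rho^0}-1)\nabla\eta_n$ and invoking $\|\nabla\eta_n\|_{L^\infty}=O(1/n)$ together with $\nabla\sqrt{\rho^0}\in L^2(\R^d)$, while the kinetic piece is trivially bounded via the identity $\sqrt{\rho_n^0}u_n^0 = \sqrt{\rho^0}u^0\mathbf{1}_{(-n,n)^d}$.

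For the four limits \eqref{eq:datal2}--\eqref{eq:Lambda}, I would split each integral as $\int_{\T_n^d}=\int_{[-n/2,n/2]^d}+\int_{\Qn}$. On the inner cube $\eta_n\equiv 1$, so $\rho_n^0=\rho^0$ and $\sqrt{\rho_n^0}u_n^0=\sqrt{\rho^0}u^0$, and the first piece converges to the corresponding integral over $\R^d$ by dominated convergence using the $L^2$/$L^1$ bounds from Lemma \ref{lem:dataQNS}. On the annulus $\Qn$ the four integrands $(\sqrt{\rho_n^0}-1)^2$, $|\nabla\sqrt{\rho_n^0}|^2$, $F(\rho_n^0)$, $\rho_n^0|u_n^0|^2$ are pointwise dominated by $(\sqrt{\rho^0}-1)^2$, by $2|\nabla\sqrt{\rho^0}|^2+2|\sqrt{\rho^0}-1|^2\|\nabla\eta_n\|_{L^\infty}^2$, by $F(\rho^0)$, and by $\rho^0|u^0|^2$ respectively; each of these dominants lies in $L^1(\R^d)$, and since $\Qn\subset\{|x|>n/2\}$ escapes to infinity, all four annular contributions vanish as $n\to\infty$. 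Summing the kinetic, gradient, and internal-energy limits then produces both the convergence of $E_{\T_n^d}$ and its uniform bound. The pointwise $F$-domination will be the only delicate moment; everything else is routine cutoff bookkeeping.
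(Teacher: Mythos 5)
Your proposal is correct, and it reaches the conclusion by a slightly different (and in places more streamlined) route than the paper. The key step you flag --- the pointwise bound $F(\rho_n^0)\le F(\rho^0)$, obtained from the observation that $\sqrt{\rho_n^0}=1+(\sqrt{\rho^0}-1)\eta_n$ forces $\rho_n^0$ to lie between $1$ and $\rho^0$, combined with the monotonicity of $F$ on either side of its minimum at $1$ --- is valid and replaces the paper's more elaborate construction for \eqref{eq:renenergyQNS}, where a piecewise dominating function $G$ is built on the three regions $\{\sqrt{\rho^0}\le \tfrac1{\sqrt2}\}$, $\{\tfrac1{\sqrt2}\le\sqrt{\rho^0}\le1\}$, $\{\sqrt{\rho^0}\ge1\}$ and convexity of $s\mapsto F(s^2)$ is invoked on the last region; your single dominant $F(\rho^0)\in L^1(\R^d)$ is simpler and gives the same conclusion. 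For \eqref{eq:grad} the paper expands $\nabla\sqrt{\rho_n^0}$ and estimates the cross term by H\"older ($L^2\times L^6\times L^3$ on $\Qn$) together with Sobolev embedding and tail decay, whereas your inner-cube/annulus splitting with the uniform dominant $2|\nabla\sqrt{\rho^0}|^2+C|\sqrt{\rho^0}-1|^2\in L^1(\R^d)$ avoids the $L^6$ embedding entirely; both work, the paper's version giving slightly more quantitative information on the error. Finally, for \eqref{eq:energyperiodicdata} the paper checks $(\rho_n^0)^\gamma\in L^1(\T_n^d)$ via the Orlicz-type splitting on $\{|\rho_n^0-1|\le\tfrac12\}$, while you use the identity $\rho^\gamma=1+\gamma(\rho-1)+\gamma(\gamma-1)F(\rho)$; your Cauchy--Schwarz control of the linear term is legitimate as stated, since $\rho_n^0-1=2(\sqrt{\rho^0}-1)\eta_n+(\sqrt{\rho^0}-1)^2\eta_n^2$ makes $\sqrt{\rho^0}-1\in L^2(\R^d)$ (or directly $|\rho_n^0-1|\le|\rho^0-1|$ with Lemma \ref{lem:dataQNS}) sufficient to get an $O(n^{d/2})$ bound. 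The remaining limits \eqref{eq:datal2} and \eqref{eq:Lambda} are handled as in the paper, so there is no gap.
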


We emphasize that the bound on the right-hand side of \eqref{eq:energyperiodicdata} depends on $n$ through the internal energy in view of the far-field condition \eqref{eq:farfield}. 

\begin{proof}
Given $(\sqrt{\rho^0},\sqrt{\rho^0} u^0)$ of finite energy let the sequence $(\sqrt{\rho_n^0},\sqrt{\rho_n^0}u_n^0)$ be defined by \eqref{eq:initialdata}. Inequality \eqref{eq:energyperiodicdata} will then follow from the convergences \eqref{eq:grad},  \eqref{eq:renenergyQNS} and \eqref{eq:Lambda}. Since \eqref{eq:datal2} and \eqref{eq:grad} imply that  $\|\sqrt{\rho_n^0}-1\|_{H^1(\T_n^d)}\leq C$ uniformly in $n$, we conclude that $\sqrt{\rho_n^0}\in H^1(\T_n^d)$ for all $n\in \N$. From \eqref{eq:renenergyQNS}, one has that $\rho_n^0-1\in L_2^{\gamma}(\T_n^d)$, thus also $(\rho_n^0)^{\gamma}\in L^1(\T_n^d)$ by checking that 
\begin{equation*}
    \int_{\T_n^d}(\rho_n^0)^{\gamma}\mathbf{1}_{\{|\rho_n^0-1|\leq \frac{1}{2}\}}\dd x\leq \int_{\T_n^d}1+L_{\gamma}|\rho-1|\mathbf{1}_{\{|\rho_n^0-1|\leq \frac{1}{2}\}} \dd x\leq C_{n,d},
\end{equation*}
where $C_{n,d}$ is proportional to the volume of $\T_n^d$.
Thus, it only remains to prove \eqref{eq:datal2} - \eqref{eq:Lambda}. Let us show \eqref{eq:datal2}. 
We notice that 
\begin{equation*}
    \sqrt{\rho_n^0}-1=(\sqrt{\rho^0}-1)\eta_n,
\end{equation*}
and thus 
\begin{equation*}
    | \sqrt{\rho_n^0}-1|\leq |\sqrt{\rho^0}-1|.
\end{equation*}
We conclude by means of the dominated convergence Theorem that 
\begin{equation*}
    \lim_{n\rightarrow\infty}\int_{\T_n^d}\left|\sqrt{\rho_n^0}-1\right|^2\dd x
   =\int_{\R^d}|\sqrt{\rho^0}-1|^2\dd x
\end{equation*}
Similarly in order to derive \eqref{eq:grad} , we recall that $supp(\nabla\eta_n)\subset \Qn$ and observe that $\Q_n$ has measure of order $O(n^d)$. We have that for all $n\in \N$,
\begin{equation*}
    \begin{aligned}
    &\int_{\T_n^d}\frac{1}{2}|\nabla\sqrt{\rho_{0}^n}|^2\dd x= \int_{\R^d}\frac{1}{2}\left|(\nabla\sqrt{\rho^0})\eta_n+(\sqrt{\rho^0}-1)\nabla\eta_n\right|^2\dd x\\
    &=\frac{1}{2}\int_{\R^d}\eta_n^2\left|\nabla\sqrt{\rho^0}\right|^2+2\eta_n(\sqrt{\rho^0}-1)\nabla\sqrt{\rho^0}\cdot\nabla\eta_n+(\sqrt{\rho^0}-1)^2|\nabla\eta_n|^2\dd x\\
    &\leq \int_{\R^d}\frac{1}{2}|\nabla\sqrt{\rho^0}|^2\dd x
    +\left(\|\nabla\sqrt{\rho^0}\|_{L^2(\Qn)}\|\sqrt{\rho}_0-1\|_{L^6(\Qn)}\|\eta_n\nabla\eta_n\|_{L^3(\Qn)}\right)\\
    &+\frac{1}{2}\|\nabla\eta_n\|_{L^{\infty}(\Qn)}^2\|\sqrt{\rho^0}-1\|_{L^2(\Qn)}^2\\
    &\leq \int_{\R^d}\frac{1}{2}|\nabla\sqrt{\rho^0}|^2\dd x+n^{\frac{d-3}{3}}\|\nabla\sqrt{\rho^0}\|_{L^2(\Qn)}\|\sqrt{\rho}_0-1\|_{L^6(\Qn)}+\frac{1}{2}n^{-2}\|\sqrt{\rho^0}-1\|_{L^2(\Qn)}^2.
    \end{aligned}
\end{equation*}
We notice that
\begin{equation*}
    \limsup_{n\rightarrow\infty}\left(n^{\frac{d-3}{3}}\|\nabla\sqrt{\rho^0}\|_{L^2(\Qn)}\|\sqrt{\rho}_0-1\|_{L^6(\Qn)}\right)=0
\end{equation*}
Indeed, one has that $\sqrt{\rho^0}-1\in H^1(\R^d)$ and therefore
\begin{equation*}
    \limsup_{n\rightarrow \infty}\|\sqrt{\rho^0}-1\|_{L^p(\Qn)}=0,
\end{equation*}
for all $2\leq p\leq p^{\ast}$ as consequence of the decay of the tails of $L^p$-functions.
It is immediate to see,
\begin{equation*}
     \limsup_{n\rightarrow\infty}\left(\frac{1}{2}n^{-2}\|\sqrt{\rho^0}-1\|_{L^2(\Qn)}^2\right)=0
\end{equation*}
The dominated convergence Theorem yields that 
\begin{equation*}
        \lim_{n\rightarrow\infty}\int_{\T_n^d}\frac{1}{2}|\nabla\sqrt{\rho_{0}^n}|^2\dd x= \int_{\R^d}\frac{1}{2}|\nabla\sqrt{\rho^0}|^2\dd x
\end{equation*}

For \eqref{eq:renenergyQNS}, we observe that $\sqrt{\rho_n^0}$ converges pointwise to $\sqrt{\rho^0}$ a.e. on $\R^d$. Since, $F(\cdot)$ is convex on $\R_+$, it follows that $F(\rho_{0}^n)$ converges pointwise to $F(\rho^0)$ a. e. on $\R^d$. We aim to show the desired inequality and the convergence by introducing $G$ defined as 
\begin{equation*}
    G(x)= \begin{cases}
   C \qquad &\text{if} \quad x\in \{\sqrt{\rho^0}\leq \frac{1}{\sqrt{2}}\},\\
    C|\sqrt{\rho^0}-1|^2 \qquad &\text{if} \quad x\in \{\frac{1}{\sqrt{2}}\leq \sqrt{\rho^0}\leq 1\},\\
    F(\rho^0) \qquad &\text{if} \quad x\in \{\sqrt{\rho^0}\geq 1\},
    \end{cases}
\end{equation*}
We notice that $G:\R_+\rightarrow \R_+$ and $G\in L^1(\R^3)$. Splitting the integral, we have that the contribution coming from the domain $ \{\sqrt{\rho^0}\leq \frac{1}{\sqrt{2}}\}$ is finite since the volume of the set is bounded. The second contribution is bounded as $\sqrt{\rho^0}-1\in H^1(\R^d)$ and the last is bounded as $F(\rho^0)\in L^1(\R^d)$.\\
We claim that $F(\sqrt{\rho_n^0}^2)(x)\leq G(x)$ for all $x\in \T_n^d$ and $n\in \N$. Indeed, if $x\in \{\sqrt{\rho^0}\leq \frac{1}{\sqrt{2}}\}$, then $x\in \{0\leq\sqrt{\rho_n^0}\leq 1\}$ for all $n\in \N$. Thus, on the given set $F(\sqrt{\rho_n^0}^2)\leq F(0)=\gamma-1$. Next, if $x\in \{\frac{1}{\sqrt{2}}\leq \sqrt{\rho^0}\leq 1\}$ then $x\in \{\frac{1}{\sqrt{2}}\leq \sqrt{\rho_n^0}\leq 1\}$ for all $n\in \N$. Thus, on the prescribed domain
\begin{equation*}
    F(\sqrt{\rho_n}^2)\leq C \left|\sqrt{\rho_n^0}^2-1\right|^2=\left|(\sqrt{\rho^0}-1)^2\eta_n^2+2(\sqrt{\rho^0}-1)\eta_n\right|^2\leq C |\sqrt{\rho}_0-1|^2,
\end{equation*}
as $|\sqrt{\rho^0}-1|\leq 1$.
If $x\in \{\sqrt{\rho^0}\geq 1\}$, then in particular $x\in \{\sqrt{\rho_n^0}\geq 1\}$ for all $n\in \N$. Thus, the concatenation $F((\sqrt{\rho_n^0})^2)$ is a convex function on the set $\{\sqrt{\rho^0}\geq 1\}$ and 
\begin{equation*}
    F(\sqrt{\rho_n^0}^2)\leq \eta_n F(\sqrt{\rho^0}^2)+(1-\eta_n)F(1)\leq F(\rho^0).
\end{equation*}
We may therefore apply the dominated convergence Theorem to obtain
\begin{equation*}
\begin{aligned}
\lim_{n\rightarrow\infty}\int_{\T_n^d}F(\rho_n^0)\dd x=\int_{\R^d}F(\rho^0)\dd x.
\end{aligned}
\end{equation*} 
The convergence \eqref{eq:Lambda} is immediate. 
\end{proof}

\subsection{Existence of solutions on periodic domains}
In this section, we discuss the existence of a sequence of weak solutions to the system \eqref{eq:QNSFEWS} on $\T_n^d$ with initial data $(\sqrt{\rho_{0}^{n}},\sqrt{\rho_{0}^n}u_{0}^n)$ provided by Lemma \ref{lem:data}.  In \cite{LV16}, the authors show global existence of weak solutions to  \eqref{eq:QNSFEWS} posed on $[0,T)\times\T_n^d$ for $\gamma>1$, $\nu>0$ and $\kappa\geq 0$ complemented with initial data of finite energy. The construction of weak solutions proceeds in several steps: 
\begin{enumerate}
\item The weak solutions to an auxiliary system including drag forces provided in \cite{VY16} satisfy a truncated formulation of the equations.
\item Suitable compactness properties of the truncated solutions allow one to construct a truncated solution to \eqref{eq:QNSFEWS}.
\item The truncated weak solutions to \eqref{eq:QNSFEWS} are shown to be in particular weak solutions to \eqref{eq:QNSFEWS}.
\end{enumerate}
By a different approach, namely constructing smooth approximate weak solutions to \eqref{eq:QNSFEWS}, the existence of weak solutions to \eqref{eq:QNSFEWS} posed on $\T_n^d$ satisfying properties (i)-(iii) has also been obtained in \cite{AS, LZZ}, following the strategy of \cite{LX}. These results require additional restrictions on $\gamma,\kappa$. The solutions obtained in \cite{AS} can be shown to be finite energy weak solutions to \eqref{eq:QNSFEWS} on $\T_n^d$ in the sense of Definition \ref{defi:FEWS}, namely the solutions provided in \cite{AS} satisfy \eqref{eq:energyQNS} and \eqref{ineq:BDEQNS} with $C=1$, see Appendix A of \cite{AHM18}. Following the arguments in \cite{LV16} and in the Appendix A of \cite{AHM18}, it can be checked that the solutions of \cite{LV16} also enter the class of  finite energy weak solution in the sense of Definition \ref{defi:FEWS}.
We define 
\begin{equation}\label{eq:kappan}
  \kappa_n=\begin{cases}
  \kappa_n=\kappa \qquad \text{if} \quad \kappa>0,\\
  \kappa_n=\frac{1}{n} \qquad \text{if} \quad \kappa=0.
  \end{cases}  
\end{equation}

The notion of (approximate) truncated weak solution to \eqref{eq:QNSFEWS} is the following.

\begin{definition}\label{defi:truncatedsolution}
Let $d=2,3$. Given a domain $\Omega$, a sequence
$(\sqrt{\rho_n},\sqrt{\rho_n}u_n)$ is called approximate truncated weak solution to \eqref{eq:QNSFEWS} on $(0,T)\times\Omega$ with initial data $(\rho_n^0,u_n^0)$ if the following are satisfied.
\begin{enumerate}
    \item Integrability conditions
\begin{equation*}
\begin{aligned}
&\sqrt{\rho_n}\in L_{loc}^{2}((0,T)\times \Omega); \quad 
\sqrt{\rho_n}u_n\in L_{loc}^{2}((0,T)\times \Omega); \quad  \quad \nabla\sqrt{\rho_n}\in L_{loc}^{2}((0,T)\times \Omega); \\
&\nabla\rho_n^{\frac{\gamma}{2}}\in L_{loc}^{2}((0,T)\times \Omega); \quad \Tvn\in L_{loc}^2((0,T)\times \Omega); \quad \kappa_n\nabla^2\sqrt{\rho_n}\in L_{loc}^{2}((0,T)\times \Omega);\\
&\sqrt{\kappa_n}\nabla\rho_n^{\frac{1}{4}}\in L_{loc}^4((0,T)\times\Omega);
\end{aligned}
\end{equation*}
\item (Approximate continuity equation) there exists a sequence of distributions $D_n\in \mathcal{D}'([0,T)\times \Omega)$ such that $D_n\rightarrow 0$ in $\mathcal{D}'$ as $n\rightarrow \infty$ and
\begin{equation}\label{eq:approxCE}
    \int_{\Omega}\rho^0\psi(0,x) \dd x+\int_{\Omega}\rho_n\psi_t+\rho_nu_n\cdot \nabla\psi\dd x \dd t=\left\langle D_n,\psi\right\rangle,
\end{equation}
for any $\psi\in C_c^{\infty}([0,T)\times \Omega)$.
\item (Approximate momentum equation and compatibility conditions) for  any truncation function $\beta$ as in Definition \ref{defi:truncation}, there exist sequences of measures $\mu_{\beta}^n, \overline{\mu}_{\beta}^n$ obeying the bound
\begin{equation*}
     \|\mu_{\beta}^n\|_{\mathcal{M}}+\|\overline{\mu}_{\beta}^n\|_{\mathcal{M}}\leq C\|\nabla^2\beta\|_{L^{\infty}},
\end{equation*}
uniformly in $n$ and distributions $G_n, K_n, V_n \in \mathcal{D}'([0,T)\times\Omega)$ such that 
$G_n, K_n,V_n \rightarrow 0$ in $\mathcal{D}'$  as $n\rightarrow \infty$
and such that
\begin{equation}\label{eq:MEtrunc}
     \begin{aligned}
    &\int_{\Omega}\rho^0 \bdl(u^0)\psi(0,x)\dd x+\int_0^T\int_{\Omega}\rho_n\bdl(u_n)\partial_t\psi+\rho_n\bdl(u_n)u_n\cdot \nabla\psi \dd x \dd t\\
    &-2\int_0^T\int_{\Omega}\left(\sqrt{\nu}\Svn\nyb(u_n)+\kappa_n\sqrt{\rho_n}\Skn\nyb(u_n))\cdot\nabla\psi
    +2\rho_n^{\frac{\gamma}{2}}\nabla\rho_n^{\frac{\gamma}{2}}\right)\nyb(u)\cdot\nabla\psi\dd x\dd t\\
    &=\left\langle \mu_{\beta}^n+G_n,\psi\right\rangle,
    \end{aligned}
\end{equation}
with $\Svn$ being the symmetric part of $\Tvn$ satisfying the compatibility condition
\begin{equation*}
    \sqrt{\nu}\sqrt{\rho_n}\nyb(u_n)_i[\Tvn]_{ik}=\nu\partial_j(\rho\nyb(u_n)_i u_{n,k})-2\nu\sqrt{\rho_n}u_{n,k}\nyb(u_n)_i\partial_j\sqrt{\rho_n}+\overline{\mu}_{\beta}^n+V_n
\end{equation*}
in $\mathcal{D}'([0,T)\times\Omega)$ and the capillary tensor $\Skn$ satisfying
\begin{equation*}
   \kappa_n\Skn=\kappa_n^2\left(\sqrt{\rho_n}\left(\nabla^2\sqrt{\rho_n}-4(\nabla\rho_n^{\frac{1}{4}}\otimes\nabla\rho_n^{\frac{1}{4}}\right)\right)+K_n.
\end{equation*}
in $\mathcal{D}'([0,T)\times\Omega)$.
\end{enumerate}
Finally, we say that $(\sqrt{\rho_n},\sqrt{\rho_n}u_n)$ is a sequence of truncated weak solutions if $D_n=G_n=K_n=V_n=0$. Further, a truncated weak solution is called finite energy truncated weak solution if in addition \eqref{eq:energyQNS} and \eqref{ineq:BDEQNS} are satisfied.
\end{definition}

The following global existence Theorem for weak solutions in the sense of Definition \ref{defi:truncatedsolution} holds, see \cite{LV16}.
\begin{theorem}\label{thm:FEWStorus}
Let $\gamma>1, \nu>0$, $\kappa_n$ as defined in \eqref{eq:kappan} and $(\sqrt{\rho_n^0},\sqrt{\rho_n^0}u_n^0)$ be provided by Lemma \ref{lem:data}. Then there exists a sequence  $(\sqrt{\rho_n},\sqrt{\rho_{n}}u_{n})$ of finite energy truncated weak solutions to \eqref{eq:QNSFEWS} on $(0,T)\times\T_n^d$ with measures $R_\beta^n$, $\overline{R}_\beta^n$. In particular, $(\sqrt{\rho_n},\sqrt{\rho_{n}}u_{n})$ defines a finite energy weak solution to \eqref{eq:QNSFEWS} on $[0,T)\times\T_n^d$.
\end{theorem}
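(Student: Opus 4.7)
The plan is to deduce this result essentially as a direct consequence of the existence theory developed in \cite{LV16} on the flat torus, applied individually to each torus $\T_n^d$ for the initial data constructed in Lemma \ref{lem:data}. For each fixed $n \in \N$, the pair $(\sqrt{\rho_n^0}, \sqrt{\rho_n^0} u_n^0)$ belongs to the class of admissible initial data in \cite{LV16}: indeed, Lemma \ref{lem:data} already provides the finite energy bound $E_{\T_n^d}(\sqrt{\rho_n^0}, \sqrt{\rho_n^0} u_n^0) < +\infty$, and the inequality $\|\sqrt{\rho_n^0} - 1\|_{H^1(\T_n^d)} \leq C$ together with the kinetic energy bound also yield the BD entropy control required to initiate the scheme. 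Here the $n^d$-growth of the bound in \eqref{eq:energyperiodicdata} is harmless since $n$ is fixed throughout the construction on $\T_n^d$.

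The construction of \cite{LV16} proceeds in the three steps recalled in the text preceding Definition \ref{defi:truncatedsolution}. First, one approximates \eqref{eq:QNSFEWS} on $\T_n^d$ by the drag-regularized system of \cite{VY16}, whose weak solutions enjoy enough integrability on $u_n$ to justify testing the momentum equation against $\beta_\delta^l(u_n)$ and thereby obtain a truncated momentum identity in the sense of item (3) of Definition \ref{defi:truncatedsolution}, with tensors $\Svn, \Skn$ defined via the compatibility relations and remainder measures $\mu_\beta^n, \overline{\mu}_\beta^n$ controlled by $\|\nabla^2\beta\|_{L^\infty}$. Second, the uniform bounds coming from the energy and the Bresch–Desjardins entropy provide enough compactness (strong convergence of $\sqrt{\rho_n}$, weak convergence of $\Tvn, \Skn$, and the measures remaining uniformly bounded) to pass to the limit in the drag parameter, yielding an exact truncated weak solution $(D_n = G_n = K_n = V_n = 0)$ on $\T_n^d$. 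Third, applying this truncated formulation with $\beta = \beta_\delta^l$ and sending $\delta \to 0$ using Lemma \ref{lem:truncation} (namely $\beta_\delta^l(y) \to y_l$ and $\nabla \beta_\delta^l(y) \to e_l$, with the bounds $\|\mu_\beta^n\|_{\mathcal M} + \|\overline{\mu}_\beta^n\|_{\mathcal M} \leq C\|\nabla^2\beta\|_{L^\infty} \to 0$) recovers the weak formulation of the momentum equation of Definition \ref{defi:FEWS}; the admissible integrability of the various densities, momenta and tensors then follows at once from the energy and BD entropy estimates.

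Next I would check that the solution actually falls into the class of Definition \ref{defi:FEWS}, including the precise form of the energy inequality \eqref{eq:energyQNS} and the BD entropy inequality \eqref{ineq:BDEQNS} with the universal constant $C$. This is a lower-semicontinuity argument: both functionals are convex and lower-semicontinuous with respect to the weak convergences available from the uniform bounds, so the inequalities pass to the limit in the drag and truncation approximations and yield the bounds stated with $C=1$, as carried out in Appendix A of \cite{AHM18} for the scheme of \cite{AS}; the same argument applies mutatis mutandis to the approximation of \cite{LV16} as remarked in the text preceding the theorem. Finally, the choice $\kappa_n$ in \eqref{eq:kappan} is used to handle the case $\kappa = 0$: one runs the construction with positive capillarity $\kappa_n = 1/n$ so that the framework of \cite{LV16} applies verbatim, the resulting solution obeys the estimates uniformly up to the capillary terms vanishing as $n \to \infty$.

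The proof is therefore primarily bookkeeping rather than a genuinely new argument. The main delicate point is verifying that the compatibility relations for $\Svn$ and $\Skn$ in item (3) of Definition \ref{defi:truncatedsolution}, as well as the control of the measures $\mu_\beta^n, \overline{\mu}_\beta^n$, really are produced by the approximation procedure of \cite{LV16} and survive the successive passages to the limit. This is where one must align the precise renormalization identities used in \cite{LV16, AHM18} with the formulation of Definition \ref{defi:truncatedsolution}; once this alignment is made, the statement follows.
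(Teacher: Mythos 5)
Your proposal is correct and follows essentially the same route as the paper, which does not prove this theorem from scratch but deduces it from the construction of \cite{LV16} (drag-force approximation of \cite{VY16}, passage to the limit in the truncated formulation) together with the verification, as in Appendix A of \cite{AHM18}, that these solutions satisfy the energy and BD entropy inequalities and hence fall into Definition \ref{defi:FEWS}. Your additional bookkeeping (admissibility of the data from Lemma \ref{lem:data}, the $\delta\to 0$ limit via Lemma \ref{lem:truncation}, and the choice $\kappa_n=1/n$ when $\kappa=0$) matches the paper's intended argument.
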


Several remarks are in order. 
\begin{enumerate}
    \item In \eqref{eq:energyQNS} and \eqref{ineq:BDEQNS} the pressure term $\frac{1}{\gamma-1}\rho^{\gamma}$, has been replaced by the internal energy $F(\rho)$. We recall that the internal energy and the pressure law are related by the equation $p(\rho)=F'(\rho)\rho-F(\rho)$. On a bounded domain $\Omega$, one easily checks that $F(\rho_n)\in L^1(\Omega)$ is equivalent to $\rho_n\in L^{\gamma}(\Omega)$.
    \item The bounds on the measures $R_{\beta}^n,\overline{R}_{\beta}^n$ are uniform in $n$ since only depending on the second derivatives of $\beta$ being bounded in virtue of Lemma \ref{lem:truncation}.
\item We comment on the energy and entropy inequalities. The solutions provided by Theorem \ref{thm:FEWStorus} satisfy the following bounds uniformly in $n$. Since the solutions provided by Theorem \ref{thm:FEWStorus} satisfy the energy \eqref{eq:energyQNS} and BD entropy inequality \eqref{ineq:BDEQNS} and we infer from Lemma \ref{lem:data} that
\begin{equation}\label{eq:energyuniform}
\begin{aligned}
   & \limsup_{n\rightarrow\infty}\left(E_{\T_n^d}(\sqrt{\rho_n},\sqrt{\rho_n}u_n)(t)+\int_0^T\int_{\T_n^d}|\Svn|^2\dd x \dd t\right)
    \leq \limsup_{n\rightarrow\infty}E_{\T_n^d}(\sqrt{\rho_n^0},\sqrt{\rho_n^0}u_n^0)\\
   & \leq C \int_{\R^d}\frac{1}{2}\rho^0|u^0|^2+2\kappa^2|\nabla\sqrt{\rho^0}|^2+F(\rho^0)\dd x.
    \end{aligned}
\end{equation}
From Theorem \ref{thm:FEWStorus} and again from Lemma \ref{lem:data} we conclude that there exists $C>0$ such that 
\begin{equation}\label{eq:BDuniform}
\begin{aligned}
    &\limsup_{n\rightarrow\infty}\left( {B}_{\T_n^d}(t)+\int_0^T\left(\nu|\nabla\rho_n^{\frac{\gamma}{2}}|^2+\nu\kappa_n^2(|\nabla\rho_n^{\frac{1}{4}}|^4+|\nabla^2\sqrt{\rho_n}|^2)+|\Tvn|^2\right)\dd x\dd t\right)\\
    &\leq  C \limsup_{n\rightarrow\infty} \left(1+ B_{\T_n^d}(\sqrt{\rho_n^0},\sqrt{\rho_n^0}u_n^0)\right)\\
    &\leq C\int_{\R^d}\frac{1}{2}\rho^0|u^0|^2+(2\kappa^2+4v^2)|\nabla\sqrt{\rho^0}|^2+F(\rho^0)\dd x.
    \end{aligned}
\end{equation}
\end{enumerate}

\section{Extension to approximate solutions and convergence}\label{sec:ext}
In this section, we show that there exists a finite energy truncated weak solution to \eqref{eq:QNSFEWS} on the whole space with far-field condition \eqref{eq:farfield}. The strategy of our method consists in several steps.
\begin{enumerate}[(i)]
    \item We extend the sequence of periodic solutions provided by Theorem \ref{thm:FEWStorus} to a sequence of functions defined on the whole space. 
\item We prove that the extensions provide a sequence of approximate truncated solutions to \eqref{eq:QNSFEWS} on $\R^d$ according to Definition \ref{defi:truncatedsolution}.
\item As $n$ goes to $\infty$, the sequence converges to a finite energy truncated weak solution to \eqref{eq:QNSFEWS}.
\end{enumerate}

Given a sequence of approximate truncated solutions on the invading domains $\T_n^d$ provided by Theorem \ref{thm:FEWStorus}, we define the density and momenta $(\rho_n:=(\sqrt{\rho_n})^2,m_n:=\sqrt{\rho_n}\sqrt{\rho_n}u_n)$. We extend $(\rho_n,m_n)$ by the stationary solution $(\rho=1,m=0)$ on $\R^d\backslash[-n,n]^d$ . Let $\eta_n\in C_c^{\infty}(\R^d)$ be defined as in \eqref{eq:eta} and denote $\Qn=[-n,n]^d\backslash (-\frac{n}{2},\frac{n}{2})^d$ so that $supp(\nabla\eta_n)\subset \Qn$. We introduce,
\begin{equation}\label{eq:extension}
\begin{aligned}
    &\widetilde{\rho_n}:=\rho_n\eta_n+(1-\eta_n),  
    \quad \tilde{m}_n=m_n\eta_n\\
    &\widetilde{\Svn}=\Svn\eta_n, \quad \widetilde{\Tvn}=\Tvn\eta_n.
    \end{aligned}
\end{equation}
Further, we denote 
\begin{equation*}
    \tilde{u}_n=\begin{cases}
    \frac{\tilde{m}_n(t,x)}{\tilde{\rho}_n(t,x)} \qquad &\text{if} \quad (t,x)\in\{\tilde{\rho}_n>0\},\\
    0 \qquad &\text{if} \quad (t,x)\in \{\tilde{\rho}_n=0\}.
    \end{cases}
\end{equation*}
Finally, we define
\begin{equation}\label{eq:datatilde}
    \tilde{\rho}_{n}^0=(\sqrt{\rho_n^0})^2\eta_n+(1-\eta_n), \qquad \tilde{m}_n^0=m_n^0\eta_n,
\end{equation}
and 
\begin{equation*}
    \tilde{u}_n^0=\begin{cases}
     \frac{\tilde{m}_n^0(x)}{\tilde{\rho}_n^0(x)} \qquad &\text{if} \quad x\in\{\tilde{\rho}_n^0>0\},\\
    0 \qquad &\text{if} \quad x\in \{\tilde{\rho}_n^0=0\}.
    \end{cases}
\end{equation*}

The main result of this Section is the following.
\begin{theorem}\label{thm:extensions}
Let $d=2,3$, $\gamma>1$, $\nu>0$, $\kappa\geq 0$ and $\kappa_n$ as defined in \eqref{eq:kappan}. Then $(\tilde{\rho}_n,\tilde{u}_n)$ defined in \eqref{eq:extension} is an approximate truncated weak solution to \eqref{eq:QNSFEWS} with initial data $(\tilde{\rho}_n^0,\tilde{u}_n^0)$ given by \eqref{eq:datatilde} and viscosity and capillary tensor $\widetilde{\Tvn}$ and $\widetilde{\Skn}$ respectively. Further, the measures $\mu_\beta^n, \overline{\mu_\beta^n}$ satisfy
\begin{equation*}
    \mu_\beta^n=R_\beta^n\eta_n, \quad \overline{\mu_\beta^n}=\overline{R}_\beta^n\eta_n,
\end{equation*}
with ${R}_\beta^n, \overline{R}_\beta^n$ provided by Theorem \ref{thm:FEWStorus}.\\
Moreover, as $n$ goes to infinity, the sequence $(\tilde{\rho}_n,\tilde{u}_n,\widetilde\Tvn,\widetilde\Skn)$ converges to a finite energy truncated weak solution $(\rho,u)$ with initial data $(\rho^0,u^0)$ and with viscosity and capillary tensors $(\Tv,\Sk)$ weak $L^2$-limits of $\widetilde\Tvn, \widetilde\Svn$ respectively. More precisely, $\Tilde{\rho}_n$ converges strongly to $\rho$ in $C(\R_+;L_{loc}^p(\R^d))$ for $1<p<\sup\{3,\gamma\}$, the momenta $\tilde{m}_n$ converge strongly to $m$ in $L_{loc}^{2}(\R_+;L_{loc}^p(\R^d))$ in $1\leq p<\frac{3}{2}$ and $\tilde{\rho}_n\tilde{u}_n \bdl(\tilde{u}_n)$ converges strongly to $\rho u \bdl(u)$ in  $L_{loc}^{2}(\R_+;L_{loc}^2(\R^d)$. 
\end{theorem}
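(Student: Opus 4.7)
The required integrability conditions on $(\tilde\rho_n,\tilde u_n)$ follow immediately from the uniform bounds \eqref{eq:energyuniform}--\eqref{eq:BDuniform}, combined with $0\le\eta_n\le1$ and $\eta_n\in C_c^\infty$. A direct computation using $\partial_t\rho_n+\dive(\rho_n u_n)=0$ on $\T_n^d$ and the time-independence of $\eta_n$ gives
\begin{equation*}
\partial_t\tilde\rho_n+\dive(\tilde\rho_n\tilde u_n)=\rho_n u_n\cdot\nabla\eta_n=:D_n,
\end{equation*}
with $D_n$ supported in $\Qn$. For any $\psi\in C_c^\infty([0,T)\times\R^d)$, $\mathrm{supp}(\psi)$ is fixed and disjoint from $\Qn$ once $n$ is large, hence $\langle D_n,\psi\rangle=0$ eventually and $D_n\to 0$ in $\mathcal{D}'$. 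The same localization principle handles the momentum equation: multiplying \eqref{eq:MEtrunc} on $\T_n^d$ by $\eta_n$ and redistributing derivatives with the Leibniz rule yields error terms $G_n,V_n,K_n$ all supported in $\Qn$, which therefore vanish in $\mathcal{D}'$. The measures $\mu_\beta^n=R_\beta^n\eta_n$ and $\overline{\mu_\beta^n}=\overline{R}_\beta^n\eta_n$ are then directly identified, and since $0\le\eta_n\le1$ the total variation bound $\|\mu_\beta^n\|_{\mathcal M}+\|\overline{\mu_\beta^n}\|_{\mathcal M}\le C\|\nabla^2\beta\|_{L^\infty}$ is inherited from Theorem \ref{thm:FEWStorus}.

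\textbf{Block 2 (uniform bounds on $\R^d$).} Exploiting convexity of $F$ for the convex combination $\tilde\rho_n=\rho_n\eta_n+(1-\eta_n)$, the pointwise inequality $|\sqrt{\tilde\rho_n}-1|\le|\sqrt{\rho_n}-1|$, and $0\le\eta_n\le 1$, I transfer \eqref{eq:energyuniform}--\eqref{eq:BDuniform} to the whole space: $\sqrt{\tilde\rho_n}-1\in L^\infty_tH^1(\R^d)$, $\sqrt{\tilde\rho_n}\tilde u_n\in L^\infty_tL^2(\R^d)$, $\widetilde{\Tvn},\nabla\tilde\rho_n^{\gamma/2},\kappa_n\nabla^2\sqrt{\tilde\rho_n},\sqrt{\kappa_n}\nabla\tilde\rho_n^{1/4}$ are uniformly bounded in the appropriate $L^2$ or $L^4$ spaces, and $F(\tilde\rho_n)\in L^\infty_tL^1(\R^d)$.

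\textbf{Block 3 (strong compactness and passage to the limit).} By Banach-Alaoglu we extract weak-$\ast$ limits $\sqrt\rho-1,\nabla\sqrt\rho,\sqrt\rho u,\Tv,\Sk$ along a subsequence. The essential work is upgrading to strong compactness sufficient to pass to the limit in the nonlinearities. For the density, combining $\sqrt{\tilde\rho_n}\in L^\infty_tH^1_{loc}$ with the approximate continuity equation $\partial_t\tilde\rho_n=-\dive\tilde m_n+D_n$, where $\tilde m_n=\sqrt{\tilde\rho_n}\cdot\sqrt{\tilde\rho_n}\tilde u_n$ is bounded in $L^\infty_tL^{3/2}_{loc}$ by Cauchy-Schwarz and Sobolev embedding, an Aubin-Lions argument gives strong convergence $\sqrt{\tilde\rho_n}\to\sqrt\rho$ in $C_tL^q_{loc}$ for $q<2^\ast$. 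Interpolation with the uniform $L^\gamma_{loc}$ bound on $\tilde\rho_n$ yields $\tilde\rho_n\to\rho$ in $C_tL^p_{loc}$ for $1<p<\sup\{3,\gamma\}$. For the momenta, time compactness comes from the approximate momentum equation: the uniform bounds on $\mu_\beta^n$, together with the $L^2$-bounds on $\widetilde{\Svn},\widetilde{\Skn}$ and $\nabla\tilde\rho_n^{\gamma/2}$, control $\partial_t\tilde m_n$ in a negative Sobolev norm, while spatial regularity comes from the BD entropy. Aubin-Lions again yields $\tilde m_n\to m$ in $L^2_tL^p_{loc}$ for $1\le p<3/2$.

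The most delicate object is $\tilde\rho_n\tilde u_n\bdl(\tilde u_n)=\tilde m_n\bdl(\tilde u_n)$. Here the truncation function is indispensable: by Lemma \ref{lem:truncation}, $\bdl$ is bounded (by $C/\delta$) and Lipschitz, so $\bdl(\tilde u_n)$ is pointwise bounded in $L^\infty$ uniformly in $n$ even in vacuum regions. I would combine the strong $L^2_tL^p_{loc}$ convergence of $\tilde m_n$ with $L^\infty$-boundedness of $\bdl(\tilde u_n)$, identifying the pointwise a.e. limit on $\{\rho>0\}$ through convergence in measure of $\tilde u_n=\tilde m_n/\tilde\rho_n$ on $\{\rho>c\}$ for every $c>0$, and extending to the vacuum set using $\bdl(0)=0$ and $|\bdl(z)|\le C|z|$ near zero. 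This yields the claimed strong convergence of $\tilde\rho_n\tilde u_n\bdl(\tilde u_n)$ in $L^2_tL^2_{loc}$. The passage to the limit in the tensor compatibility conditions then uses strong convergence of $\sqrt{\tilde\rho_n}$ together with weak convergence of $\widetilde{\Tvn}$ and $\widetilde{\Skn}$.

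The principal obstacle is precisely this last step: controlling the nonlinearity $\rho u\otimes u$ under only the $\sqrt\rho u\in L^2$ bound, without pointwise access to $u$ on the vacuum set. This is the raison d'\^etre of the truncated formulation in Definition \ref{defi:truncatedsolution}, which substitutes $\bdl(u)$ for $u$ throughout so that all products remain well-defined and compact irrespective of vacuum, with the defect being reabsorbed into the uniformly bounded measures $\mu_\beta^n,\overline{\mu_\beta^n}$.
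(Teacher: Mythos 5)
Your overall strategy is the same as the paper's: show that the cut-off extensions satisfy the approximate truncated formulation with error distributions supported in $\Qn$ (hence vanishing in $\mathcal{D}'$), transfer the energy and BD bounds to $\R^d$, and then combine Aubin--Lions compactness for $\tilde\rho_n$, $\tilde m_n$ with weak compactness of the tensors and a Vitali-type argument for the truncated convective term. Two steps, however, do not work as you state them.

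The vacuum set. To get $\tilde m_n\bdl(\tilde u_n)\to\rho u\bdl(u)$ in $L^2_{loc}$ you must show $\tilde m_n\bdl(\tilde u_n)\to 0$ a.e.\ on $\{\rho=0\}$. Your argument ``extend to the vacuum set using $\bdl(0)=0$ and $|\bdl(z)|\le C|z|$ near zero'' does not give this: on $\{\rho=0\}$ it is $\tilde\rho_n$ that is small, while $\tilde u_n=\tilde m_n/\tilde\rho_n$ may be large, so the Lipschitz bound only yields $|\tilde m_n\bdl(\tilde u_n)|\le C\tilde\rho_n|\tilde u_n|^2=C|\sqrt{\tilde\rho_n}\tilde u_n|^2$, a quantity bounded in $L^1$ but with no pointwise decay on the vacuum set. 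The missing idea is the identification $m=0$ a.e.\ on $\{\rho=0\}$ (equivalently $m=\sqrt{\rho}\,\Lambda$ with $\Lambda$ the weak-$\ast$ limit of $\sqrt{\tilde\rho_n}\tilde u_n$), which the paper obtains by applying Fatou's lemma to $\tilde m_n^2/\tilde\rho_n$ together with the uniform $L^\infty_t L^2$ bound on $\sqrt{\rho_n}u_n$; once this is known, $|\tilde m_n\bdl(\tilde u_n)|\le\|\bdl\|_{L^\infty}|\tilde m_n|\to\|\bdl\|_{L^\infty}|m|=0$ a.e.\ on $\{\rho=0\}$ and Vitali's theorem applies. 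Without this step your pointwise identification of the limit is incomplete precisely where vacuum occurs.

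Time compactness of $\tilde m_n$ and the uniform bounds. The approximate truncated momentum equation \eqref{eq:MEtrunc} controls $\partial_t\big(\tilde\rho_n\bdl(\tilde u_n)\big)$, not $\partial_t\tilde m_n$, so the bounds on $\mu_\beta^n$, $\widetilde{\Svn}$, $\widetilde{\Skn}$ cannot by themselves give the negative-Sobolev bound on $\partial_t\tilde m_n$ you invoke. The paper instead uses that the periodic solutions are genuine (untruncated) weak solutions on $\T_n^d$ (Theorem \ref{thm:FEWStorus}), so that $\partial_t\tilde m_n=\eta_n\partial_t m_n$ is bounded in $L^2(0,T;H^{-s}_{loc})$ directly from the momentum equation, while $\nabla\tilde m_n=\eta_n\nabla m_n+m_n\nabla\eta_n$ is bounded in $L^2_tL^1_{loc}$ via the defining identity for $\Tvn$; Aubin--Lions then gives the claimed $L^2_tL^p_{loc}$ compactness, $1\le p<\frac32$. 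Finally, in your Block 2 the gradient bounds are not a pure pointwise transfer: since $\tilde\rho_n=\rho_n\eta_n+(1-\eta_n)$, differentiating $\sqrt{\tilde\rho_n}$, $\tilde\rho_n^{\gamma/2}$, $\tilde\rho_n^{1/4}$, $\nabla^2\sqrt{\tilde\rho_n}$ produces cross terms with $\nabla\eta_n$ and $\nabla^2\eta_n$ that must be controlled using $\|\nabla\eta_n\|_{L^\infty}=O(\tfrac1n)$ and the uniform $L^2(\Qn)$ bound on $\rho_n-1$, as carried out in Lemma \ref{lem:extensions}; this should at least be acknowledged rather than subsumed under convexity.
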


We notice that the measures are well-defined on $(0,T)\times \R^d$ taking into account the support properties of $\eta_n$.
We start by collecting the needed uniform estimates that will follow from \eqref{eq:energyuniform} as well as \eqref{eq:BDuniform}. These will be used to show the first part of Theorem \ref{thm:extensions}. Subsequently, we prove suitable compactness properties for the sequence $(\Tilde{\rho}_n,\Tilde{u}_n)$ needed for the passage to the limit as $n\rightarrow\infty$.

\begin{lemma}\label{lem:extensions}
The extensions introduced in \eqref{eq:extension} obey the following bounds uniformly in $n$,
\begin{equation}\label{eq:uniformextension}
    \begin{aligned}
    &\tilde{\rho}_n-1\in L^{\infty}(\R_+;L^2(\R^d)), \quad  F(\tilde\rho_n)\in L_{loc}^{\infty}(\R_{+};L^1(\R^d)),\\
    &{\sqrt{\tilde{\rho}_n}}-1\in L_{loc}^{\infty}(\R_{+};H^1(\R^d)), \quad \tilde{m}_n\in L^{\infty}(\R_+,L^{\frac{3}{2}}(\R^d)+L^2(\R^d)),\\
    &\sqrt{\tilde{\rho}_n}\tilde{u}_n\in L_{loc}^{\infty}(\R_{+};L^2(\R^d)), \quad \nabla\tilde{\rho}_n^{\frac{\gamma}{2}}\in L_{loc}^{2}(\R_{+};L^2(\R^d)), \quad \widetilde{\Tvn}\in L_{loc}^{2}(\R_{+};L^2(\R^d)).
    \end{aligned}
\end{equation}
Moreover, there exists $\sqrt{\rho}, m, \Tv, \Sv, \Sk$ such that
\begin{equation}\label{eq:extensionwc}
    \begin{aligned}
       &\sqrt{\tilde{\rho}_n}-1\rightharpoonup\sqrt{\rho}-1 \quad \text{in} \quad L^{\infty}(\R_+;H^1(\R^d)), \quad
       \tilde{m}_n\rightharpoonup m \quad \text{in} \quad L^{\infty}(\R_+;L^{\frac{3}{2}}+L^2(\R^d)),\\
       &\widetilde{\Tvn}\rightharpoonup \Tv \quad \text{in} \quad  L_{loc}^{2}(\R_{+};L^2(\R^d)), \quad
       \widetilde{\Svn}\rightharpoonup \Sv \quad \text{in} \quad  L_{loc}^{2}(\R_{+};L^2(\R^d)).
    \end{aligned}
\end{equation}
If $\kappa>0$, one has additionally that uniformly in $n$, 
\begin{equation}\label{eq:uniformextensionk}
    \kappa_n\nabla^2\sqrt{\tilde{\rho}_n}\in L_{loc}^{2}(\R_{+};L^2(\R^d)), \quad  \kappa_n^{\frac{1}{4}}\nabla\tilde{\rho}_n^{\frac{1}{4}}\in L_{loc}^{4}(\R_{+};L^4(\R^d)),
\end{equation}
and $\widetilde\Skn,\Sk\in L_{loc}^2(\R_{+};L^2(\R^d))$ such that $ \widetilde{\Skn}\rightharpoonup \Sk$ in $ L_{loc}^2(\R_{+};L^2(\R^d))$.
Finally, for a.e. $t\in [0,T)$, one has that
\begin{equation}\label{eq:energyextension}
    \begin{aligned}
        &\limsup_{n\rightarrow\infty}\Big(\int_{R^d}\frac{1}{2}\tilde{\rho}_n|\tilde{u}_n|^2+2\kappa_n^2|\nabla\sqrt{\tilde{\rho}_n}|^2+F(\tilde{\rho}_n)\dd x+2\nu\int_0^T\int_{\R^d}|\widetilde\Svn|^2\dd x\dd t\Big)\\
        &\leq C \int_{\R^d}\frac{1}{2}\rho^0|u^0|^2+2\kappa^2|\nabla\sqrt{\rho^0}|^2\dd x,
    \end{aligned}
\end{equation}
and 
\begin{equation}\label{eq:BDextensions}
\begin{aligned}
   & \limsup_{n\rightarrow\infty}\Big(\int_{\R^d}\frac{1}{2}\left|\sqrt{\tilde{\rho}_n}\tilde{u}_n(t)\right|^2+(\kappa_n^2+2\nu^2)|\nabla{\sqrt{\tilde{\rho}_n}}(t)|^2 +F((\tilde{\rho}_n)^2)(t)\dd x\\
    &+2\nu\int_0^T\int_{\R^d}|\widetilde{\Tvn}|^2\dd x\dd t+\nu\int_{0}^t\int_{\R^d}|\tilde{\rho}_n^{\frac{\gamma}{2}}|^2\dd x \dd t +\nu\kappa_n\int_0^T\int_{\R^d}|\nabla\tilde{\rho}_n^{\frac{1}{4}}|^{4}+|\nabla^2\sqrt{\tilde{\rho}_n}|^2\Big)\\
    &\leq C\left(1+ \int_{\R^d}\frac{1}{2}\rho^0|u^0|^2+(2\kappa^2+4v^2)|\nabla\sqrt{\rho^0}|^2+F(\rho^0)\dd x\right).
    \end{aligned}
\end{equation}
\end{lemma}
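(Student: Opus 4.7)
The plan is to reduce the claimed bounds to the uniform periodic estimates \eqref{eq:energyuniform}--\eqref{eq:BDuniform} by exploiting that $0\le\eta_n\le 1$, $\|\nabla\eta_n\|_{L^\infty}=O(n^{-1})$, and the explicit relation $\tilde\rho_n=1+\eta_n(\rho_n-1)$. Most quantities in the extension admit pointwise comparisons with their periodic analogues; the only subtle point is that derivatives acting on the cut-off produce error terms which must be shown to be uniformly bounded (and in fact vanishing) as $n\to\infty$.

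For the density-type bounds in \eqref{eq:uniformextension}, the pointwise estimate $|\tilde\rho_n-1|=\eta_n|\rho_n-1|\le|\rho_n-1|$ combined with the argument of Lemma~\ref{lem:dataQNS} applied to the uniform BD bound on $\T_n^d$ yields the $L^\infty_t L^2_x$ control. Convexity of $F$ with $F(1)=0$ gives $F(\tilde\rho_n)\le\eta_n F(\rho_n)$, hence the $L^1$ bound. Differentiating the extension,
\begin{equation*}
\nabla\sqrt{\tilde\rho_n}=\frac{\eta_n\sqrt{\rho_n}}{\sqrt{\tilde\rho_n}}\nabla\sqrt{\rho_n}+\frac{\rho_n-1}{2\sqrt{\tilde\rho_n}}\nabla\eta_n,
\end{equation*}
and noting $\eta_n^2\rho_n\le\tilde\rho_n$ bounds the prefactor of the first term by $1$, while the second is an $O(n^{-1})$ correction thanks to $\|\rho_n-1\|_{L^2(\Qn)}\le C$ (from Lemma~\ref{lem:dataQNS}) and the structure of $\eta_n$. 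Analogous splittings handle $\nabla\tilde\rho_n^{\gamma/2}$ and, when $\kappa>0$, $\nabla^2\sqrt{\tilde\rho_n}$ and $\nabla\tilde\rho_n^{1/4}$. For the momentum, $|\tilde m_n|\le|m_n|$, and writing $m_n=(\sqrt{\rho_n}-1)\sqrt{\rho_n}u_n+\sqrt{\rho_n}u_n$ together with the embedding $H^1\hookrightarrow L^6$ produces the $L^{3/2}+L^2$ decomposition. The pointwise identity $\tilde\rho_n|\tilde u_n|^2=|\tilde m_n|^2/\tilde\rho_n\le\eta_n\rho_n|u_n|^2$, valid on $\{\tilde\rho_n>0\}$ using $\tilde\rho_n\ge\eta_n\rho_n$, transfers the kinetic energy bound, and the tensors obey $|\widetilde\Tvn|\le|\Tvn|$, $|\widetilde\Svn|\le|\Svn|$ directly.

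The weak convergences \eqref{eq:extensionwc} then follow from Banach--Alaoglu after diagonal extraction of subsequences. Finally, the energy and BD entropy inequalities \eqref{eq:energyextension}--\eqref{eq:BDextensions} are obtained by inserting the pointwise comparisons above into the periodic inequalities from Theorem~\ref{thm:FEWStorus} and passing to the $\limsup$ via \eqref{eq:energyuniform}--\eqref{eq:BDuniform}. The principal obstacle is the uniform control of the $\nabla\eta_n$-error terms that arise from differentiating the extensions; these are dealt with by combining $\|\nabla\eta_n\|_{L^\infty}=O(n^{-1})$ with the uniform $L^2(\Qn)$ bounds on $\rho_n-1$ and $\nabla\sqrt{\rho_n}$, so that each cut-off contribution scales like $n^{-1}$ and vanishes in the limit, preserving the desired inequalities on $\R^d$.
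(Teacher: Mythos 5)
Your proposal takes essentially the same route as the paper's proof: pointwise comparison of the extended quantities with their periodic counterparts using $0\le\eta_n\le1$ (convexity of $F$ with $F(1)=0$, the bound $\tilde\rho_n\ge\eta_n\rho_n$ for the kinetic and gradient terms, $|\tilde m_n|\le |m_n|$, $|\widetilde{\Tvn}|\le|\Tvn|$), control of the $\nabla\eta_n$ error terms supported in $\Qn$ via $\|\nabla\eta_n\|_{L^\infty}=O(n^{-1})$ combined with the uniform $L^2(\Qn)$ bounds, and conclusion through \eqref{eq:energyuniform}--\eqref{eq:BDuniform} and weak compactness. This is correct and matches the paper's argument; the only difference is that the paper spells out the case distinctions you leave as ``analogous splittings'' (convexity versus subadditivity of $t\mapsto t^{\frac{\gamma-1}{2}}$ according to $\gamma\ge 3$ or $\gamma<3$, and the sign splitting $\{\rho_n\lessgtr 1\}$ to handle the degenerate denominators in the $\nabla\tilde\rho_n^{\frac14}$ and $\nabla^2\sqrt{\tilde\rho_n}$ estimates).
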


\begin{proof}
We start by showing the bounds \eqref{eq:uniformextension}.
One has
\begin{equation*}
    \limsup_{n\rightarrow\infty}\|\tilde\rho_n-1\|_{L_t^{\infty}L_x^2(\R^d)}= \limsup_{n\rightarrow\infty}\|(\rho_n-1)\eta_n\|_{L_t^{\infty}L_x^2(\R^d)}\leq C \limsup_{n\rightarrow\infty}\|(\rho_n-1)\|_{L_t^{\infty}L_x^2(\T_n^d)},
\end{equation*}
that is bounded in view of \eqref{eq:energyuniform} and \eqref{eq:BDuniform}. Indeed, proceeding as in the proof of Lemma \ref{lem:dataQNS}, one obtains that  the right hand side is bounded by the sum of $F(\rho_n)\in L^{\infty}(\R_+;L^1(\T_n^d))$ and $\nabla\sqrt{\rho_n}\in L^{\infty}(\R_+;L^2(\T_n^d)))$. Those, in their turn, are uniformly bounded in virtue of \eqref{eq:energyuniform} and \eqref{eq:BDuniform}. Next, due to convexity of the renormalized internal energy and $F(1)=0$, one has
\begin{equation*}
    F(\tilde{\rho}_n)\leq \eta_nF(\rho_n),
\end{equation*}
which yields
\begin{equation*}
    \limsup_{n\rightarrow\infty}\|F(\tilde{\rho}_n)\|_{L^{\infty}(\R_+;L^1(\R^d))}\leq \limsup_{n\rightarrow\infty} \|F(\rho_n)\|_{L^{\infty}(\R_+;L^1(\T_n^d))},
\end{equation*}
being bounded again by \eqref{eq:energyuniform}. The pointwise inequality
\begin{equation*}
    |\sqrt{\tilde{\rho}_n}-1|\leq |\tilde{\rho}_n-1|,
\end{equation*}
yields the bound $\sqrt{\tilde{\rho}_n}-1\in L^{\infty}(\R_+;L^2(\R^d))$ uniformly. Next,
\begin{equation*}
\begin{aligned}
&\limsup_{n\rightarrow\infty} \|\nabla\sqrt{\tilde{\rho}_n}\|_{L^{\infty}(\R_+;L^2(\R^d))}\\
&=\limsup_{n\rightarrow\infty} \left\|\frac{1}{2\sqrt{\rho_n\eta_n+(1-\eta_n)}}\left(\eta_n\nabla\rho_n+(\rho_n-1)\nabla\eta_n\right)\right\|_{L^{\infty}(\R_+;L^2(\R^d))}\\
&\leq C\limsup_{n\rightarrow\infty}\left( \left\|\frac{\sqrt{\eta_n}}{2\sqrt{\rho_n}}\nabla\rho_n\right\|_{L^{\infty}(\R_+;L^2(\R^d)}+\left\|\frac{(\rho_n-1)}{2\sqrt{(\rho_n-1)\eta_n+1}}\nabla\eta_n\right\|_{L^{\infty}(\R_+;L^2(\R^d))}\right)\\
&\leq C\limsup_{n\rightarrow\infty}\left\|\nabla\sqrt{\rho_n}\right\|_{L^{\infty}(\R_+;L^2(\T_n^d))},
\end{aligned}
\end{equation*}
where we used that
\begin{equation*}
    \limsup_{n\rightarrow\infty}\left\|\frac{\rho_n-1}{\sqrt{(\rho_n-1)\eta_n+1}}\nabla\eta_n\right\|_{L^{\infty}(\R_+;L^2(\T_n^d))}\leq C \limsup_{n\rightarrow\infty}\|\rho_n-1\|_{L^{\infty}(\R_+;L^2(\Qn))}=0
\end{equation*}
following from the integrability properties of $\rho_n-1$, the $L^{\infty}$-bound  and the support properties for the cut-off $\eta_n$ and its gradient. The bound on the momenta is an immediate consequence of  \eqref{eq:energyuniform} by observing that $|\tilde{m}_n|\leq |m_n|$ on $[-n,n]^d$ and $\tilde{m}_n=0$ on $\R^d\backslash[-n,n]^d$. The bound on $\widetilde{\Tvn}$ is analogous. 
Next, we show the bound on $\nabla\tilde{\rho}_n^{\frac{\gamma}{2}}$. If $\gamma\geq 3$, then $f(t)=t^{\frac{\gamma-1}{2}}$ is convex and therefore
\begin{equation*}
    \begin{aligned}
       &\limsup_{n\rightarrow\infty}\|\nabla\tilde{\rho}_n^{\frac{\gamma}{2}}\|_{L^2(0,T;L^2(\R^d))}=\limsup_{n\rightarrow\infty}\|\gamma(\sqrt{\tilde{\rho}_n})^{\gamma-1}\nabla\sqrt{\tilde{\rho}_n}\|_{L^2(0,T;L^2(\R^d))}\\
       &\leq\limsup_{n\rightarrow\infty}\|\gamma\left((\eta_n\rho_n^{\frac{\gamma-1}{2}}+(1-\eta_n))\nabla\sqrt{\tilde{\rho}_n}\right)\|_{L^2(0,T;L^2(\T_n^d))}, 
    \end{aligned}
\end{equation*}
and proceeding analogously as in the bound for $\nabla\sqrt{\tilde{\rho}_n}$ we conclude by invoking \eqref{eq:BDuniform}. If $1<\gamma<3$, we use that $f(t)=t^{\frac{\gamma-1}{2}}$ is a concave function s.t. $f(0)=0$ and therefore sub-additive and proceed as in the previous case. We conclude that 
\begin{equation*}
    \limsup_{n\rightarrow\infty}\|\nabla\tilde{\rho}_n^{\frac{\gamma}{2}}\|_{L^2(0,T;L^2(\R^d))}\leq C \limsup_{n\rightarrow\infty}\|\nabla{\rho_n}^{\frac{\gamma}{2}}\|_{L^2(0,T;L_{x}^2(\T_n^d))}.
\end{equation*}\\
Finally, we show the bounds \eqref{eq:uniformextensionk},
\begin{equation*}
    \begin{aligned}
       &\limsup_{n\rightarrow\infty}\|\nabla\tilde{\rho}_n^{\frac{1}{4}}\|_{L^4(0,T;L^4(\R^d))}\\
       &\leq C  \limsup_{n\rightarrow\infty}\left(\left\|\frac{\eta_n}{4(\rho_n\eta_n+1-\eta_n)^{\frac{3}{4}}}\nabla\rho_n\right\|_{L_t^4L_x^4(\R^d)}+\left\|\frac{(\rho_n-1)}{4(\rho_n\eta_n+1-\eta_n)^{\frac{3}{4}}}\nabla\eta_n\right\|_{L_t^4L_x^4(\R^d)}\right)
    \end{aligned}
\end{equation*}
The first term is controlled by
\begin{equation*}
    \limsup_{n\rightarrow\infty}\left\|\frac{\eta_n}{4(\rho_n\eta_n+1-\eta_n)^{\frac{3}{4}}}\nabla\rho_n\right\|_{L_t^4L_x^4(\R^d)}\leq\left\|\eta_{n}^{\frac{1}{4}}\nabla\rho_n^{\frac{1}{4}}\right\|_{L_t^4L_x^4(\T_n^d)},
\end{equation*}
that is uniformly bounded from \eqref{eq:BDuniform}. 
The second term is estimated as 
\begin{equation*}
\begin{aligned}
   &\limsup_{n\rightarrow\infty} \left\|\frac{(\rho_n-1)}{4(\rho_n\eta_n+1-\eta_n)^{\frac{3}{4}}}\nabla\eta_n\right\|_{L_t^4L_x^4(\R^d)}\\
    &\leq \limsup_{n\rightarrow\infty}\left(\|\mathbf{1}_{\{\rho_n-1<0\}}\frac{1}{4(1-\eta_n)^{\frac{3}{4}}}\nabla\eta_n\|_{L_t^4L_x^4(\Qn)}+ \|\mathbf{1}_{\{\rho_n-1>0\}}\frac{(\rho_n-1)^{\frac{1}{4}}}{4\eta^{\frac34}}\nabla\eta_n\|_{L_t^4L_x^4(\Qn)}\right)\\
&\leq C \left(1+\limsup_{n\rightarrow\infty}(\|\rho_n-1\|_{L_t^4L_x^2(\Qn)}\right)=C.
    \end{aligned}
\end{equation*}
Thus,
\begin{equation*}
    \limsup_{n\rightarrow\infty}\|\nabla\tilde{\rho}_n^{\frac{1}{4}}\|_{L^4(0,T;L^4(\R^d))}\leq C  \limsup_{n\rightarrow\infty}\|\nabla\tilde{\rho}_n^{\frac{1}{4}}\|_{L^4(0,T;L^4(\T_n^d))}.
\end{equation*}
It remains to bound $\nabla^2\sqrt{\tilde{\rho}_n}$ in $L^2(0,T;L^2(\R^d))$. To that end we compute that 
\begin{equation*}
\begin{aligned}
    \nabla^2\sqrt{\tilde{\rho}_n}&=\frac{1}{2\sqrt{\tilde{\rho}_n}}\eta_n\nabla^2\rho_n-\frac{1}{4}\frac{\eta_n^2}{\tilde{\rho}_n^\frac{3}{2}}(\nabla\rho_n)^2\\
    &+\frac{1}{2}\frac{\rho_n}{\sqrt{\tilde{\rho}_n}}\nabla^2\eta_n-\frac{1}{4}\frac{(\rho_n-1)^2}{\tilde{\rho}_n^\frac{3}{2}}(\nabla\eta_n)^2\\
    &+\frac{1}{\sqrt{\tilde{\rho}_n}}\nabla\eta_n\nabla\rho_n-\frac{1}{2}\frac{\eta_n(\rho_n-1)}{\tilde{\rho}_n^\frac{3}{2}}\nabla\eta_n\nabla\rho_n.
    \end{aligned}
\end{equation*}
Proceeding analogously as for the previous bound, the $L_t^2L_x^2$-norm of the RHS of the first line is bounded by $\|\nabla^2\sqrt{\rho_n}\|_{L^2(0,T;L^2(\T_n^d))}$ that again is uniformly bounded in view of \eqref{eq:energyuniform}. The other terms can be controlled by exploiting the properties of $\eta_n$ and \eqref{eq:energyuniform} and 
\eqref{eq:BDuniform} holding uniformly in $n$.
Finally, we observe that since
\begin{equation*}
    \left|\sqrt{\tilde{\rho}_n}\tilde{u}_n\right|= \left|\frac{m_n\eta_n}{\sqrt{\rho_n\eta_n+(1-\eta_n)}}\right|\leq \sqrt{\eta_n}\left|\frac{m_n}{\sqrt{\rho_n}}\right|,
\end{equation*}
one has
\begin{equation*}
\begin{aligned}
    \int_{\R^d}\frac{1}{2}\tilde{\rho}_n|\tilde{u}_n|^2\dd x\leq \int_{\T_n^d}\frac{1}{2}\sqrt{\rho_n}\left|u_n\right|^2\dd x
    \end{aligned}
\end{equation*}

Therefore, combing the previous inequalities with \eqref{eq:energyuniform} and \eqref{eq:BDuniform} we conclude that inequalities \eqref{eq:energyextension} and \eqref{eq:BDextensions} are satisfied for a.e. $t\in [0,T)$.
Hence, the uniform bounds \eqref{eq:uniformextension} follow.
\end{proof}

The uniform bounds lead to the following convergence results. 
\begin{lemma}\label{lem:convergenceQNS}
The following convergences hold up to subsequences. 
\begin{enumerate}
    \item $\tilde\rho_n\rightarrow \rho$ strongly in $C(\R_+;L_{loc}^p(\R^d))$ for $1<p<\sup\{3,\gamma\}$.
    \item $\tilde{m}_n\rightarrow m$ strongly in $L_{loc}^{2}(\R_+;L_{loc}^p(\R^d))$ in $1\leq p<\frac{3}{2}$.
     \item $\nabla\tilde{\rho}_n^{\frac{\gamma}{2}}\rightharpoonup \nabla\rho^{\frac{\gamma}{2}}$ weakly in $L^2(0,T;L_{loc}^2(\R^d))$.
\end{enumerate}
If $\kappa>0$, then
\begin{enumerate}
   \item $\sqrt{\tilde{\rho}_n}\rightarrow\sqrt{\rho}$ strongly in $L_{loc}^2(0,T;H_{loc}^1(\R^d))$,
   \item $\nabla^2\sqrt{\tilde{\rho}_n}\rightharpoonup\nabla^2\sqrt{\rho}$ weakly in $L_{loc}^2(0,T;L_{loc}^2\R^d))$.
\end{enumerate}
\end{lemma}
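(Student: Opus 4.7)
The strategy is to combine the uniform bounds of Lemma \ref{lem:extensions} with Aubin--Lions type arguments, using the approximate continuity equation \eqref{eq:approxCE} and the truncated momentum equation \eqref{eq:MEtrunc} (from Theorem \ref{thm:extensions}) to supply the time regularity needed to upgrade weak compactness to strong compactness.

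\emph{Step 1 (strong convergence of $\tilde\rho_n$).} From $\sqrt{\tilde\rho_n}-1$ uniformly bounded in $L^\infty_t H^1_x$ together with $F(\tilde\rho_n)\in L^\infty_t L^1(\R^d)$, the argument of Lemma \ref{lem:dataQNS} applied at a.e. time $t$ yields a uniform bound for $\tilde\rho_n$ in $L^\infty_t L^p_{loc}(\R^d)$ for every $p<\sup\{3,\gamma\}$. The approximate continuity equation \eqref{eq:approxCE} with $\tilde m_n\in L^\infty_t(L^{3/2}+L^2)_x$ and $D_n\to 0$ in $\mathcal D'$ gives $\partial_t\tilde\rho_n$ bounded in $L^\infty_t W^{-1,q}_{loc}$ for some $q>1$. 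An Aubin--Lions argument on a compact exhaustion of $\R^d$, combined with the equicontinuity in time supplied by the continuity equation, then delivers the strong convergence $\tilde\rho_n\to\rho$ in $C([0,T];L^p_{loc}(\R^d))$ for $1<p<\sup\{3,\gamma\}$ after a diagonal extraction.

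\emph{Step 2 (strong convergence of $\tilde m_n$).} Weak-$\ast$ compactness of $\tilde m_n$ in $L^\infty_t(L^{3/2}+L^2)_x$ is given by \eqref{eq:extensionwc}. To pass to strong convergence we invoke the truncated momentum equation \eqref{eq:MEtrunc}: the uniform bounds on $\widetilde\Tvn$, $\widetilde\Svn$, $\widetilde\Skn$ and $\nabla\tilde\rho_n^{\gamma/2}$ from Lemma \ref{lem:extensions}, together with $\|\mu_\beta^n\|_{\mathcal M}+\|\overline\mu_\beta^n\|_{\mathcal M}\le C\|\nabla^2\beta\|_{L^\infty}$, produce a bound of $\partial_t(\tilde\rho_n\,\bdl(\tilde u_n))$ in $L^2_tW^{-s,q}_{loc}$ for suitable $s>0$, $q>1$. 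Letting $\delta\to 0$ in the truncation (using Lemma \ref{lem:truncation}(2)) transfers the estimate to $\partial_t\tilde m_n$. Aubin--Lions then yields strong convergence of $\tilde m_n$ in $L^2_{loc}(\R_+;L^p_{loc}(\R^d))$ for $1\le p<3/2$.

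\emph{Step 3 (weak convergences and the case $\kappa>0$).} The uniform $L^2_tL^2_x$ bound on $\nabla\tilde\rho_n^{\gamma/2}$ from \eqref{eq:BDextensions} gives weak convergence of some subsequence to a limit $\xi$. Since Step 1 implies $\tilde\rho_n^{\gamma/2}\to\rho^{\gamma/2}$ in $L^1_{loc}$ by dominated convergence, the distributional identity forces $\xi=\nabla\rho^{\gamma/2}$. When $\kappa>0$, the additional bounds \eqref{eq:uniformextensionk} yield uniform control of $\sqrt{\tilde\rho_n}$ in $L^2_tH^2_{loc}$; Step 1 provides time equicontinuity of $\sqrt{\tilde\rho_n}$ via the chain rule and the $L^\infty_t H^1_x$ bound on $\nabla\sqrt{\tilde\rho_n}$; Aubin--Lions then gives strong convergence in $L^2_{loc}(0,T;H^1_{loc}(\R^d))$. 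The weak convergence of $\nabla^2\sqrt{\tilde\rho_n}$ to $\nabla^2\sqrt\rho$ follows from the uniform $L^2_tL^2_x$ bound and the identification of the limit through the strong $L^2_{loc}$ convergence of $\sqrt{\tilde\rho_n}$.

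\emph{Main obstacle.} The delicate point is Step 2. Because of the possible presence of vacuum in $\tilde\rho_n$ one cannot work with a time estimate on $\tilde u_n$ directly, and the only available equation is the truncated one \eqref{eq:MEtrunc}, which introduces both the bounded measures $\mu_\beta^n,\overline\mu_\beta^n$ and the distributional errors $G_n,K_n,V_n$ generated by the cut-off $\eta_n$. The double passage to the limit $\delta\to 0$ and $n\to\infty$ must be carried out in the right order, exploiting $\|\nabla\eta_n\|_{L^\infty}=O(1/n)$ and the decay of $\rho_n-1$ and $\sqrt{\rho_n}u_n$ on $\Qn$ (as used already in Lemma \ref{lem:data}) to absorb the commutator remainders.
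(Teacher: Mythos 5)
Your skeleton (uniform bounds from Lemma \ref{lem:extensions} plus Aubin--Lions) is the paper's, and Steps 1 and 3 essentially coincide with its argument. The genuine gap is in Step 2: Aubin--Lions needs a \emph{spatial} compactness ingredient for $\tilde m_n$, i.e. a uniform bound in a space compactly embedded in $L^p_{loc}$, and you never provide one — you only invoke the $L^\infty_t(L^{3/2}+L^2)$ bound and a time-derivative estimate, which together give weak-$\ast$ compactness and time equicontinuity in a very weak space, not strong convergence in $L^2_{loc}(\R_+;L^p_{loc})$. The paper supplies the missing piece by writing $\nabla\tilde m_n=\eta_n\nabla m_n+m_n\nabla\eta_n$ and bounding $\nabla m_n=\nabla(\rho_n u_n)$ in $L^2(0,T;L^1_{loc})$ through the compatibility identity \eqref{eq:visctensor}, namely $\nu\nabla(\rho_n u_n)=\sqrt{\nu}\sqrt{\rho_n}\,\Tvn+2\nu\,(\sqrt{\rho_n}u_n)\otimes\nabla\sqrt{\rho_n}$, using the uniform $L^2$ bound on $\Tvn$ from \eqref{eq:BDuniform} and the $L^\infty_tL^2$ bounds on $\sqrt{\rho_n}u_n$ and $\nabla\sqrt{\rho_n}$ from \eqref{eq:energyuniform}. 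This yields $\tilde m_n\in L^2(0,T;W^{1,1}_{loc}(\R^d))$ uniformly, and the compact embedding $W^{1,1}_{loc}\hookrightarrow\hookrightarrow L^p_{loc}$ for $p<\tfrac{3}{2}$ (in $d=3$) is exactly what dictates the exponent range in item (2); without this bound the claimed convergence does not follow from the estimates you list.

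A secondary point: your detour through the truncated momentum equation \eqref{eq:MEtrunc} and a $\delta\to 0$ limit to estimate $\partial_t\tilde m_n$ is not needed, and your "main obstacle" paragraph misidentifies the difficulty. By Theorem \ref{thm:FEWStorus} the pairs $(\sqrt{\rho_n},\sqrt{\rho_n}u_n)$ are finite energy weak solutions on $\T_n^d$ in the sense of Definition \ref{defi:FEWS}, so the \emph{untruncated} weak momentum equation is available; since $\partial_t\tilde m_n=\eta_n\partial_t m_n$, one reads off directly that $\partial_t m_n$ is bounded in $L^2(0,T;H^{-s}_{loc})$ for $s$ large, because every flux ($\rho_nu_n\otimes u_n$, $\rho_n^\gamma$, $\sqrt{\rho_n}u_n\otimes\nabla\sqrt{\rho_n}$, $\sqrt{\rho_n}\Svn$, the capillary terms) is uniformly bounded at worst in $L^2(0,T;L^1_{loc})$ by \eqref{eq:energyuniform}--\eqref{eq:BDuniform}. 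Your double limit could probably be made rigorous (using $|\bdl(y)|\le |y_l|$, $|\nyb|\le C$ uniformly in $\delta$, and lower semicontinuity as $\delta\to 0$ at fixed $n$), but it is extra machinery for a problem that is not present at this stage: the truncation is needed only later, in Section \ref{sec:weak}, when the $\delta$-limit is performed to recover the full momentum equation. Step 1 is fine as written (the error $D_n$ has support in $\Qn$, so it vanishes locally for $n$ large; the paper even avoids it by writing $\partial_t\tilde\rho_n=\eta_n\partial_t\rho_n$), and your treatment of Step 3 and the case $\kappa>0$ matches the paper.
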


\begin{proof}
We repeatedly use that $(\sqrt{\rho_n}, \sqrt{\rho_n}u_n)$ is a finite energy weak solution to \eqref{eq:QNSFEWS} on the scaled torus. 
\begin{enumerate}
    \item One has that uniformly in $n$,
    \begin{equation*}
    \begin{aligned}
        &\|\partial_{t}\tilde{\rho}_n\|_{L^{\infty}(\R_+;W_{loc}^{-1,\frac{3}{2}}(\R^d))}= \|\eta_n\partial_{t}\rho_n\|_{L^{\infty}(\R_+;W_{loc}^{-1,\frac{3}{2}}(\R^d))}\\
       & \leq \|m_n\|_{L^{\infty}(\R_+;L_{loc}^{\frac{3}{2}}(\T_n^d))},
          \end{aligned}
    \end{equation*}
    being bounded in virtue of \eqref{eq:energyuniform}.
    On the other hand, 
   $\tilde{\rho}_n\in L^{\infty}(\R_+;L_{loc}^{\gamma}\cap L_{loc}^3(\R^d))$ and $\nabla\tilde{\rho}_n\in L^{2}(\R^d)+L^{\frac{3}{2}}(\R^d)$ and hence $\tilde{\rho}_n\in L^{\infty}(\R_+;W_{loc}^{1,\frac{3}{2}}(\R^d))$.
    Thus, we conclude from the Aubin-Lions Lemma that $\tilde{\rho}_n-1$ is compact in $C(\R_+;L_{loc}^p(\R^d))$
    for any $1\leq p<\sup\{3,\gamma\}$.
    \item Since $\partial_t\tilde{m}_n=\eta_n\partial_tm_n$, from the second equation of \eqref{eq:QNSFEWS} we infer that $\partial_t(\rho_nu_n)$ is uniformly bounded in $L^2(\R_+;H_{loc}^{-s}(\R^d)$ for $s$ large enough by applying the uniform bounds of \eqref{eq:energyuniform} and \eqref{eq:BDuniform}.
    From \eqref{eq:uniformextension}, we have that $\tilde{m}_n\in L^{\infty}(\R_+;L^\frac{3}{2}+L^2(\R^d)$ and 
    $\nabla\tilde{m}_n=\eta_n\nabla m_n+m_n\nabla\eta_n$ is bounded in $L^{2}(0,T;L_{loc}^1(\R^d))$, thus $\tilde{m}_n\in L^2(0,T;W_{loc}^{1,1}(\R^d))$. The Aubin-Lions lemma implies that $\tilde{m}_n$ is compact in $L^2(0,T;L_{loc}^{p}(\R^d))$ for $1\leq p<\frac{3}{2}$.
    \item The uniform bound $\nabla\tilde{\rho}_n^{\frac{\gamma}{2}}\in L^2(0,T;L^2(\R^d))$ implies that up to passing to subsequences the sequence converges weakly with the weak limit being identified with $\nabla\rho^{\frac{\gamma}{2}}$ by means of point (1).
\end{enumerate}
If $\kappa>0$, we have $\nabla^2\sqrt{\tilde{\rho}_n}\in L^2(0,T;L^2(\R^d)$ additionally to $\sqrt{\tilde{\rho}_n}-1\in L^{\infty}(\R_+;H^1(\R^d))$. Hence, $\nabla^2\sqrt{\tilde{\rho}_n}$ converges weakly to $\nabla^2\sqrt{\rho}$ in $L_{loc}^2(0,T;L_{loc}^2(\R^d))$ up to subsequences.
Further, by combining the strong convergence of $\tilde{\rho}_n$ and the bounds on the second order derivatives of $\sqrt{\tilde{\rho}_n}$ we obtain that 
\begin{equation*}
    \sqrt{\tilde{\rho}_n}\rightarrow \sqrt{\rho} \qquad \text{in} \quad L_{loc}^2(0,T;H_{loc}^1(\R^d)).
\end{equation*}
\end{proof}

\begin{lemma}\label{lem:convergenceQNSren}
Let  $f\in C\cap L^{\infty}(\R^d;\R)$ and let $(\tilde{\rho}_n,\tilde{u}_n)$ be as defined in \eqref{eq:extension} and 
\begin{equation}\label{eq:defu}
u:=\begin{cases}
\frac{m(t,x)}{{\rho(t,x)}}
\quad &(t,x)\in\{\rho>0\},\\
0 \quad &(t,x)\in\{\rho=0\}.
\end{cases}
\end{equation}
Then,
\begin{enumerate}
\item for any $0<\alpha<\frac{5\gamma}{3}$, one has $\tilde{\rho}_n^\alpha f(\tilde{u}_n)\rightarrow \rho^\alpha f(u)$ in $L^p((0,T)\times\R^d))$ with $1\leq p<\frac{5\gamma}{3\alpha}$,
\item $\tilde{\rho}_nu_n f(\tilde{u}_n)\rightarrow \rho u f(u)$ strongly in $L_{loc}^{2}(\R_+;L_{loc}^2(\R^d))$,
\end{enumerate}
\end{lemma}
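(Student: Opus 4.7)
My plan is to prove both items via the Vitali convergence theorem: establish a.e.\ convergence of the products along a common subsequence, then verify uniform integrability in the relevant Lebesgue space.

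By Lemma \ref{lem:convergenceQNS}, passing to a subsequence we may assume $\tilde\rho_n \to \rho$ and $\tilde m_n \to m$ almost everywhere on $(0,T)\times\R^d$. I split the domain into $\{\rho > 0\}$ and $\{\rho = 0\}$. On $\{\rho > 0\}$, $\tilde\rho_n \to \rho > 0$ a.e.\ gives $\tilde u_n = \tilde m_n/\tilde\rho_n \to m/\rho = u$ a.e., and by continuity and boundedness of $f$ both $\tilde\rho_n^\alpha f(\tilde u_n) \to \rho^\alpha f(u)$ and $\tilde m_n f(\tilde u_n) \to m f(u)$ a.e. On $\{\rho = 0\}$, one has $\tilde\rho_n^\alpha \to 0$ a.e.\ and $\tilde m_n \to 0$ a.e.\ (the latter because the integrability $\sqrt\rho\,u \in L^2$ forces $m = \sqrt\rho\,\Lambda = 0$ on vacuum), while $|f(\tilde u_n)| \le \|f\|_{L^\infty}$ uniformly; hence both products vanish in the limit, matching $\rho^\alpha f(u) = 0 = m f(u)$ under the convention $u = 0$ on vacuum.

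For part (1), the pointwise bound $|\tilde\rho_n^\alpha f(\tilde u_n)|^p \le \|f\|_{L^\infty}^p \tilde\rho_n^{\alpha p}$ reduces uniform integrability to that of $\tilde\rho_n^{\alpha p}$. Combining the energy bound $F(\tilde\rho_n) \in L^\infty_t L^1_{loc}$ (equivalently $\tilde\rho_n^{\gamma/2} \in L^\infty_t L^2_{loc}$) with the BD entropy bound $\nabla\tilde\rho_n^{\gamma/2} \in L^2_t L^2_x$ from \eqref{eq:BDextensions}, Sobolev embedding and Gagliardo--Nirenberg interpolation yield the standard uniform estimate $\tilde\rho_n \in L^{5\gamma/3}_{loc}((0,T)\times\R^d)$ in $d = 3$ (with better exponents in $d = 2$). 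For $\alpha p < 5\gamma/3$, this gives uniform boundedness of $\tilde\rho_n^{\alpha p}$ in $L^{5\gamma/(3\alpha p)}_{loc}$ with exponent strictly above one, whence uniform integrability on bounded sets by H\"older, and Vitali's theorem concludes.

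For part (2), using $|\tilde\rho_n \tilde u_n f(\tilde u_n)|^2 \le \|f\|_{L^\infty}^2 |\tilde m_n|^2 = \|f\|_{L^\infty}^2\,\tilde\rho_n\,|\sqrt{\tilde\rho_n}\tilde u_n|^2$, I seek uniform integrability of $|\tilde m_n|^2$ on bounded sets. The uniform $L^{5\gamma/3}_{loc}$-bound on $\tilde\rho_n$ combined via H\"older with an improved space-time integrability of $\Lambda_n := \sqrt{\tilde\rho_n}\tilde u_n$ beyond the energy bound $L^\infty_t L^2_x$ delivers the required control; the improvement is obtained from the BD entropy estimates \eqref{eq:BDextensions} and the compatibility identity \eqref{eq:visctensor}, which yields $\nabla\tilde m_n = \nu^{-1/2}\sqrt{\tilde\rho_n}\,\widetilde{\mathbf T}_{\nu,n} + 2\Lambda_n \otimes \nabla\sqrt{\tilde\rho_n}$, together with Sobolev embedding and interpolation. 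The main technical obstacle lies precisely here: Lemma \ref{lem:convergenceQNS} provides strong convergence of $\tilde m_n$ only in $L^2_t L^p_x$ with $p < 3/2$, strictly below the target $p = 2$, so the BD entropy and the viscous tensor structure must be invoked to upgrade the integrability of the momenta to $L^{>2}_{loc}$; once uniform $L^{1+\epsilon}_{loc}$-integrability of $|\tilde m_n|^2$ is established, a.e.\ convergence plus Vitali close the argument. A secondary subtlety, handled in the first step, is that $\tilde u_n$ itself need not converge pointwise on the vacuum region, which is why the weights $\tilde\rho_n^\alpha$ and $\tilde m_n$ are essential to suppress contributions from vacuum.
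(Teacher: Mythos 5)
Your handling of item (1) and of the pointwise convergences is essentially the paper's own argument: a.e.\ convergence of $\tilde\rho_n$ and $\tilde m_n$ from Lemma \ref{lem:convergenceQNS}, the splitting into $\{\rho>0\}$ and $\{\rho=0\}$ with $m=0$ on vacuum (the paper obtains this by Fatou applied to $\tilde m_n^2/\tilde\rho_n$, which also identifies $m=\sqrt\rho\,\Lambda$ -- a step you assert rather than prove, but it is the same idea), and the uniform $L^{5\gamma/3}_{loc}$ space--time bound on $\tilde\rho_n$ obtained by interpolating $\tilde\rho_n^{\gamma/2}\in L^\infty_t L^2_{loc}$ with $L^2_t L^6_{loc}$, followed by Vitali. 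Up to that point the proposal matches the paper.

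The gap is in item (2). You correctly reduce the claim to uniform integrability of $|\tilde m_n f(\tilde u_n)|^2\le \|f\|_{L^\infty}^2|\tilde m_n|^2$ on compact sets, but you do not establish it: you call it ``the main technical obstacle'' and only sketch an upgrade of the momenta to $L^{>2}_{loc}$ integrability via the identity $\nabla\tilde m_n=\nu^{-1/2}\sqrt{\tilde\rho_n}\,\widetilde{\Tvn}+2\Lambda_n\otimes\nabla\sqrt{\tilde\rho_n}$, Sobolev embedding and interpolation. As sketched this cannot deliver the conclusion: the term $\Lambda_n\otimes\nabla\sqrt{\tilde\rho_n}$ is only bounded in $L^\infty_t L^1_{loc}$, so the identity gives at best $\tilde m_n\in L^2_t W^{1,1}_{loc}\hookrightarrow L^2_t L^{3/2}_{loc}$ in $d=3$; likewise H\"older from $\Lambda_n\in L^\infty_t L^2$ and $\sqrt{\tilde\rho_n}\in L^\infty_t L^6_{loc}$ (or $L^{2\gamma}_{loc}$) caps the spatial exponent of $\tilde m_n$ strictly below $2$, and no interpolation among these bounds, nor with $\tilde\rho_n\in L^{5\gamma/3}_{loc}$, can push it above $2$, because $\Lambda_n$ is only square integrable in space. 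Hence the asserted uniform $L^{1+\epsilon}_{loc}$-bound on $|\tilde m_n|^2$ does not follow from your argument, and item (2) is left unproven at its decisive step. The paper does not take this detour: after the same a.e.\ convergence and the pointwise domination $|\tilde\rho_n\tilde u_n f(\tilde u_n)|\le\|f\|_{L^\infty}|\tilde m_n|$, it concludes by Vitali directly from the uniform bounds collected in Lemma \ref{lem:extensions} (in particular the bounds on $\tilde\rho_n$, $\sqrt{\tilde\rho_n}\tilde u_n$ and, for $\kappa>0$, on $\nabla^2\sqrt{\tilde\rho_n}$ and $\nabla\tilde\rho_n^{1/4}$), rather than through an improved integrability of the momenta; if you want a complete written proof of (2) you must supply the equi-integrability from those bounds (e.g.\ by splitting according to the size of $\tilde u_n$ or of $\tilde\rho_n$), not from the momentum-gradient identity.
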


\begin{proof}
We recall that (1) and (2) of Lemma \ref{lem:convergenceQNS} imply that 
$\tilde{\rho}_n$ converges to $\rho$  a.e. in $(0,T)\times \R^d$ and $\tilde{m}_n$ converges to $m$  a.e. in $(0,T)\times \R^d$. 
The Fatou Lemma implies that
\begin{equation*}
\begin{aligned}
    &\int_{\R_+}\int_{\R^d} \liminf_{n\rightarrow\infty} \frac{\tilde{m}_n^2}{\tilde{\rho}_n}\dd x \dd t\leq \liminf_{n\rightarrow\infty} \int_{\R_+}\int_{\R^d}\frac{\tilde{m}_n^2}{\tilde{\rho}_n}\dd x \dd t\\
    &\leq \liminf_{n\rightarrow\infty} \int_{\R_+}\int_{\R^d}\eta_n|\sqrt{\rho_n}u_n|^2\dd x \dd t\leq \liminf_{n\rightarrow\infty} \int_{\R_+}\int_{\T_n^d}|\sqrt{\rho_n}u_n|^2\dd x \dd t <+\infty,
    \end{aligned}
\end{equation*}
due to inequality \eqref{eq:energyuniform}. Hence $m=0$ on the null set of $\rho$ and by consequence
$\sqrt{\rho}u\in L^{\infty}(0,T;L^2(\R^d))$. Moreover, since $\sqrt{\tilde{\rho}_n}\tilde{u}_n$ is uniformly bounded in $L^{\infty}(0,T;L^2(\R^d))$ it converges weakly-$\ast$ to some limit function $\Lambda\in L^{\infty}(0,T;L^2(\R^d))$. By uniqueness of weak-limits and the a.e. convergence of $\tilde{\rho}_n,\tilde{m}_n$ we recover, $m=\rho u=\sqrt{\rho}\Lambda$.

We show (1). It easily follows that 
\begin{equation*}
    \limsup_{n\rightarrow \infty}{\{\tilde{\rho}_n=0\}}\subset \{\rho=0\}, \qquad \text{and} \qquad \{\rho>0\}\subset \liminf_{n\rightarrow \infty}{\{\tilde{\rho}_n>0\}}.
\end{equation*}
Then
\begin{equation*}
    \tilde{\rho}_n^{\alpha}f(\tilde{u}_n)\rightarrow\rho^{\alpha} f(u), \quad \text{a.e. in} \quad \{\rho>0\}.
\end{equation*}
Moreover, since $f\in L^{\infty}(\R^d;\R)$ and $\alpha>0$ we have that
\begin{equation*}
   \left|\tilde{\rho}_n^{\alpha}f(u_n)\right|\leq |\tilde{\rho}_n|^{\alpha}\|f\|_{L^{\infty}}\rightarrow 0 \quad \text{a.e. in } \quad \{\rho=0\}.
\end{equation*}
It follows, that $\tilde{\rho}_n^{\alpha}f(\tilde{u}_n)$ converges to $\rho^{\alpha} f(u)$ a.e. in $(0,T)\times \R^d$.
From \eqref{eq:uniformextension}, we have that $\tilde{\rho}_n^{\frac{\gamma}{2}}\in L^{\infty}(\R_+;L_{loc}^2(\R^d))\cap L^{2}(0,T;L_{loc}^6(\R^d))$ uniformly.
By interpolation $\tilde{\rho}_n^{\frac{\gamma}{2}}\in L^{\frac{10}{3}}(0,T;L_{loc}^{\frac{10}{3}}(\R^d))$. Together with Vitali's convergence theorm, this yields strong convergence of $\tilde{\rho}_n^{\alpha}f(\tilde{u}_n)$ in $L_{loc}^{p}((0,T)\times\R^d))$ for $0<\alpha<\frac{5\gamma}{3}$ and $1\leq p<\frac{5\gamma}{3\alpha}$.\\
Next, we show (2). We use again that $\tilde{\rho}_n$ and $\tilde{m}_n$ converge a.e. in $(0,T)\times \R^d$ and conclude that
\begin{equation*}
\begin{aligned}
   & \Tilde{\rho}_n\Tilde{u}_nf(\Tilde{u}_n)\rightarrow m f(u) \quad \text{a.e. in} \quad \{\rho>0\},\\
    &\left|\Tilde{\rho}_n\Tilde{u}_n f(\Tilde{u}_n)\right|\leq \left|\tilde{m}_n\right|\|f\|_{L^{\infty}}\rightarrow 0 \qquad \text{a.e. in}\quad \{\rho=0\}.
    \end{aligned}
\end{equation*}
Vitali's convergence theorem together with the uniform bounds from Lemma \ref{lem:extensions} yield the strong convergence in $L_{loc}^{2}(\R_+;L_{loc}^2(\R^d)$.
\end{proof}

We are now in position to proof Theorem \ref{thm:extensions}.
\begin{proof}
Let $(\tilde{\rho}_n^0,\tilde{m}_n^0)$ and $\tilde{u}_n^0$ be defined by \eqref{eq:datatilde} and $(\tilde{\rho}_n,\tilde{m}_n)$ be defined by \eqref{eq:extension}. The required uniform bounds are consequence of \eqref{eq:uniformextension} and \eqref{eq:uniformextensionk}.
We compute
\begin{equation*}
    \begin{aligned}
         &\int_{\R^d}\tilde{\rho}_n^0\psi(0,x) \dd x+\int_0^T\int_{\R^d}\tilde{\rho}_n\psi_t+\tilde{m}_n\cdot \nabla\psi\dd x \dd t\\
         &=\int_{\R^d}\rho_n^0\eta_n\psi\dd x+\int_0^T\int_{R^d}\rho_n\partial_t(\eta_n\psi)+m_n\cdot\nabla(\eta_n\psi)\dd x \dd t\\
         &+\int_{\R^d}(1-\eta_n)\psi(0,x)\dd x+\int_0^T\int_{\R^d}\partial_t\left((1-\eta_n)\psi\right)-m_n\psi\cdot \nabla\eta_n\dd x \dd t\\
         &=-\int_0^T\int_{\R^d}m_n\psi\cdot\nabla\eta_n\dd x \dd t,
    \end{aligned}
\end{equation*}
where we used that $\eta_n\psi\in C_c^{\infty}([0,T)\times\T_n^d)$ and thus the second line is the weak formulation of the continuity equation for $(\rho_n,m_n)$ on $\T_n^d$ for an admissible test-function. We denote 
\begin{equation}\label{eq:Fn}
    \left\langle D_n,\psi\right\rangle=-\int_0^T\int_{\R^d}m_n\psi\cdot\nabla\eta_n\dd x \dd t,
\end{equation}
and observe that $D_n$ is well-defined and uniformly bounded in $\mathcal{D}'([0,T)\times\R^d)$ since $m_n\in L^{\infty}(0,T;L^{\frac{3}{2}}(\T_n^d))$ and $\eta_n\in C_c^{\infty}(\R^d)$. Further, as $supp(\nabla\eta_n)\subset \Qn$, we conclude $supp(D_n)\subset \Qn$. This allows to infer that $D_n$ converges to $0$ in $\mathcal{D}'$ as it is uniformly bounded and for $n$ sufficiently large $supp(\psi)\cap supp(D_n)=\emptyset$.
We proceed to verify that $(\tilde{\rho}_n,\tilde{u}_n)$ is an approximate solution to the truncated formulation of the momentum equation. We repeatedly use that $\eta_n(\bdl(\tilde{u}_n)-\bdl(u_n))\neq 0$ only on $\Qn$ and $supp(\nabla\eta_n)\subset \Qn$.
One has that,
\begin{equation}\label{eq:approxME0}
\begin{aligned}
    &\int_{\R^d}\tilde{\rho}_n^0\bdl(\tilde{u}_n^0)\psi(0,x)\dd x=\int_{\R^d}\rho_n^0\bdl(u_n^0)(\eta_n\psi_0)\dd x\\
    &+\int_{\Qn}\rho_n^0\left(\bdl(\tilde{u}_n^0)-\bdl(u_n^0)\right)\eta_n\psi(0,x)+(1-\eta_n)\psi(0,x)\dd x.
    \end{aligned}
\end{equation}
Next, one has
\begin{equation}\label{eq:approxME1}
\begin{aligned}
&\int_0^T\int_{\R^d}\tilde{\rho}_n\bdl(\tilde{u}_n)\partial_t\psi\dd x \dd t=\int_0^T\int_{\R^d} \rho_n\bdl(u_n)\partial_t(\eta_n\psi) \dd x \dd t\\
&+\int_0^T\int_{\Qn}\rho_n\left(\bdl(\tilde{u}_n)-\bdl(u_n)\right)\partial_t(\eta_n\psi)\dd x \dd t+(1-\eta_n)\bdl(\tilde{u}_n)\partial_t\psi(0,x)\dd x \dd t.
\end{aligned}
\end{equation}
Similarly,
\begin{equation}\label{eq:approxME2}
    \begin{aligned}
        &\int_0^T\int_{\R^d}\tilde{\rho}_n\tilde{u}_n\bdl(\tilde{u}_n)\cdot\nabla\psi\dd x\dd t
        =\int_0^T\int_{\R^d}\rho_n u_n\bdl(u_n)\cdot\nabla(\psi\eta_n)\dd x\dd t\\
        &+\int_0^T\int_{\Qn}\rho_n u_n\left(\left(\bdl(\tilde{u}_n)-\bdl(u_n)\right)\nabla(\psi\eta_n)-\bdl(\tilde{u}_n)\psi\nabla\eta_n\right)\dd x \dd t.
    \end{aligned}
\end{equation}
The pressure term is dealt with as follows,
\begin{equation}\label{eq:approxME3}
    \begin{aligned}
        &\int_0^T\int_{\R^d}2\tilde{\rho}_n^{\frac{\gamma}{2}}\nabla\tilde{\rho}_n^{\frac{\gamma}{2}}\nyb(\tilde{u}_n)\psi \dd x\dd t=\int_0^T\int_{\R^d}2\rho_n^{\frac{\gamma}{2}}\nabla\rho_n^{\frac{\gamma}{2}}\nyb(u_n)\eta_n\psi \dd x\dd t\\
        &-\int_0^T\int_{\R^d}\left(\tilde{\rho}_n^{\gamma}-\eta_n\rho_n^{\gamma}\right)\nabla(\nyb(\tilde{u}_n)\psi)\dd x \dd t\\
        &+\int_0^T\int_{\Qn}\rho_n^{\gamma}\nyb(\tilde{u}_n)\psi\nabla\eta_n+2\rho_n^{\frac{\gamma}{2}}\nabla\rho_n^{\frac{\gamma}{2}}(\nyb(\tilde{u}_n)-\nyb(u_n))\eta_n\psi \dd x \dd t.
    \end{aligned}
\end{equation}
Notice that the second and third line are uniformly bounded and have support contained in $\Qn$ due to the properties of $\nabla\eta_n$ and $\eta_n(\bdl(\tilde{u}_n)-\bdl(u_n))\neq 0$ only on $\Qn$. 
For the viscosity tensor we recover,
\begin{equation}\label{eq:approxME4}
    \begin{aligned}
        &\int_0^T2\nu\sqrt{\tilde{\rho}_n}\widetilde{\Svn}\nyb(\tilde{u}_n)\nabla\psi\dd x \dd t
        =\int_0^T\int_{\R^d}2\nu\sqrt{\rho_n}{\Svn}\nyb({u_n})\nabla(\psi\eta_n)\dd x \dd t\\
        &-\int_0^T\int_{\Qn}2\nu\sqrt{\tilde{\rho}_n}\Svn\nyb(\tilde{u}_n)\psi\nabla\eta_n\dd x \dd t\\ &+\int_0^T\int_{\Qn}2v\left(\sqrt{\tilde{\rho}_n}\nyb(\tilde{u}_n)-\sqrt{\rho_n}\nyb(u_n)\right)\Svn\nabla(\psi\eta_n) \dd x \dd t.
    \end{aligned}
\end{equation}
Similarly, for the capillary tensor, one has
\begin{equation}\label{eq:approxME5}
    \begin{aligned}
        &\int_0^T\int_{\R^d}2\kappa_n^2\sqrt{\tilde{\rho}_n}\widetilde{\Skn}\nyb(\tilde{u}_n)\nabla\psi\dd x \dd t
        =\int_0^T2\kappa_n^2\sqrt{\rho_n}{\Skn}\nyb({u_n})\nabla(\psi\eta_n)\dd x \dd t\\
        &-\int_0^T\int_{\Qn}2\kappa_n^2\sqrt{\tilde{\rho}_n}\Skn\nyb(\tilde{u}_n)\psi\nabla\eta_n \dd \dd t\\
        &+\int_0^T\int_{\Qn}2\kappa_n^2\left(\sqrt{\tilde{\rho}_n}\nyb(\tilde{u}_n)-\sqrt{\rho_n}\nyb(u_n)\right)\Skn\nabla(\psi\eta_n) \dd x \dd t.
    \end{aligned}
\end{equation}
Summing up equations from \eqref{eq:approxME0} to \eqref{eq:approxME5} yields that 
\begin{equation}\label{eq:approxME}
\begin{aligned}
    &\int_{\R^d}\tilde{\rho}_{n}^0\bdl(\tilde{u}_n^0)+\int_0^{T}\int_{\R^d}\tilde{\rho}_n\bdl(\tun) \partial_t\psi+\tilde{\rho}_n\tun\bdl(\tun)\cdot\nabla\psi-2\tilde{\rho}_n^{\frac{\gamma}{2}}\nabla\tilde{\rho}_n^{\frac{\gamma}{2}}\nyb(\tun)\psi \dd x \dd t\\ 
    &-\int_0^{T}\int_{\R^d}\left(2\nu{\sqrt{\tilde{\rho}_n}}\widetilde{\Svn}+2\kappa_n^2{\sqrt{\tilde{\rho}_n}}\widetilde{S_{\kappa_n,n}}\right)\nyb(\tun)\cdot\nabla\psi\dd x \dd t=\left\langle R_\beta^n,\eta_n\psi\right\rangle+\left\langle G_n,\psi\right\rangle,
    \end{aligned}
\end{equation}
where $R_\beta^n$ is the measure provided by Theorem \ref{thm:FEWStorus} and $G_n$ is a distribution such that 
\begin{equation}\label{eq:GN}
\begin{aligned}
    \left\langle G_n,\psi \right\rangle=&\int_{\R^d}\rho_n^0\left(\bdl(\tilde{u}_n^0)-\bdl(u_n^0)\right)\eta_n\psi(0,x)\dd x\\
    &+\int_0^T\int_{\R^d}\left(\bdl(\tilde{u}_n)-\bdl(u_n)\right)\left(\rho_n\partial_t(\eta_n\psi)+\rho_n u_n\cdot\nabla(\psi\eta_n)\right)-\rho_nu_n\bdl(\tilde{u}_n)\psi\nabla\eta_n\dd x\dd t\\
    &-\int_0^T\int_{\R^d}\left(2\nu\Svn+2\kappa_n^2\Skn\right)\sqrt{\tilde{\rho}_n}\nyb(\tilde{u}_n)\psi\nabla\eta_n \dd x \dd t\\
    &+2\int_0^T\int_{\R^d}\left(\sqrt{\tilde{\rho}_n}\nyb(\tilde{u}_n)-\sqrt{\rho_n}\nyb(u_n)\right)\left(\nu\Svn+\kappa_n\Skn\right)\nabla(\psi\eta_n)\dd x \dd t\\
   &-\int_0^T\int_{\R^d}\left(\tilde{\rho}_n^{\gamma}-\eta_n\rho_n^{\gamma}\right)\nabla(\nyb(\tilde{u}_n)\psi)\dd x \dd t\\
    &+\int_0^T\int_{\R^d}\rho_n^{\gamma}\nyb(\tilde{u}_n)\psi\nabla\eta_n+2\rho_n^{\frac{\gamma}{2}}\nabla\rho_n^{\frac{\gamma}{2}}(\bdl(\tilde{u}_n)-\bdl(u_n))\eta_n\psi \dd x \dd t.
\end{aligned}
\end{equation}
From the uniform bounds provided by Lemma \ref{lem:extensions}, the properties of $\beta_{\delta}^l$ and $\eta_n$ and the fact that $supp(\tilde{u}_n)\subset [-n,n]^d$, we conclude that  $supp(G_n)\subset \Qn$. In particular, there exists a uniform constant $C>0$ such that,
\begin{equation*}
    \left|\left\langle G_n,\psi\right\rangle\right|\leq C \|\psi\|_{C_c^{\infty}},
\end{equation*}
and arguing as for $D_n$, we observe that $G_n$ converges to $0$ in $\mathcal{D}'$ since for $n$ large enough $supp(G_n)\cap supp(\psi)=\emptyset$.
It remains to check that the compatibility conditions for the tensors $\widetilde\Svn$ and $\widetilde\Skn$ are satisfied. Let $\psi\in C_c^{\infty}([0,T)\times \R^d)$, then
\begin{align*}
       &\int_0^T\int_{\R^d}\sqrt{\nu}\sqrt{\tilde{\rho}_n}\nyb(\tilde{u}_n)_i[\widetilde\Tvn]_{jk}\psi\dd x\dd t=\int_0^T\int_{\R^d}\sqrt{\nu}\sqrt{\rho_n}\nyb(u_n)_i[\Tvn]_{jk}(\eta_n\psi)\dd x\dd t\\
       &+\int_0^T\int_{\R^d}\sqrt{\nu}\left(\sqrt{\tilde{\rho}_n}\nyb(\tilde{u}_n)_i-\sqrt{\rho_n}\nyb(u_n)_i\right)[\Tvn]_{jk}(\eta_n\psi)\dd x\dd t\\
       &=\int_0^T\int_{\R^d}\nu\left(\partial_j(\rho_n\nyb(u_n)_i u_{n,k})-2\sqrt{\rho_n}u_n\nyb(u_n)_i \partial_j\sqrt{\rho_n} \right)(\eta_n\psi)\dd x\dd t+\left\langle \overline{R_{\beta}}, \eta_n\psi\right\rangle \\
       &+\int_0^T\int_{\R^d}\sqrt{\nu}\left(\sqrt{\tilde{\rho}_n}\nyb(\tilde{u}_n)_i-\sqrt{\rho_n}\nyb(u_n)_i\right)[\Tvn]_{jk}(\eta_n\psi)\dd x\dd t\\
       &=\int_0^T\int_{\R^d}\nu\left(\partial_j(\tilde{\rho}_n\nyb(\tilde{u}_n)_i\tilde{u}_{n,k})-2\sqrt{\tilde{\rho}_n}u_n\nyb(\tilde{u}_n)_i\partial_j\sqrt{\tilde{\rho}_n}\right)\psi\dd x \dd t+\left\langle\overline{R_{\beta}},\eta_n\psi\right\rangle\\
       &+\int_0^T\int_{\Qn}\nu\left(\eta_n\partial_j(\rho_n\nyb(u_n)_i u_{n,k})-\partial_j(\tilde{\rho}_n\nyb(\tilde{u}_n)_i \tilde{u}_{n,k})\right)\dd x\dd t\\
       &-\int_0^T\int_{\Qn} 2\nu\left(\eta_n\left(\sqrt{\rho_n}u_n\nyb(u_n)_i\partial_j\sqrt{\rho_n}\right)+\sqrt{\tilde{\rho}_n}u_n\nyb(\tilde{u}_n)_i\partial_j\sqrt{\tilde{\rho}_n}\right)\psi\dd x\dd t\\
       &+\int_0^T\int_{\Qn}\sqrt{\nu}\left(\sqrt{\tilde{\rho}_n}\nyb(\tilde{u}_n)_i-\sqrt{\rho_n}\nyb(u_n)_i\right)[\Tvn]_{jk}(\eta_n\psi)\dd x\dd t.
    \end{align*}
Thus, there exists a distribution $V_n$ with $supp(V_n)\subset \Qn$ and such that for any $\psi\in C_c^{\infty}([0,T)\times\R^d)$ one has
\begin{equation*}
\begin{aligned}
    &\int_{0}^T\int_{\R^d}\sqrt{\nu}\sqrt{\tilde{\rho}_n}\nyb(\tilde{u}_n)_i[\widetilde\Tvn]_{jk}\psi\dd x\dd t\\
    &=\nu\int_0^T\int_{\R^d}\left(\partial_j(\tilde{\rho}_n\nyb(\tilde{u}_n)_i\tilde{u}_{n,k})-2\sqrt{\tilde{\rho}_n}u_n\nyb(\tilde{u}_n)_i\partial_j\sqrt{\tilde{\rho}_n}\right)\psi\dd x \dd t\\
    &+\left\langle \overline{R_{\beta}},\eta_n\psi\right\rangle+\left\langle V_n,\psi\right\rangle.
    \end{aligned}
\end{equation*}
Moreover, there exists a uniform $C>0$ such that
\begin{equation*}
    \left|\left\langle V_n,\psi\right\rangle\right|\leq C\|\psi\|_{C_c^{\infty}},
\end{equation*}
for any $\psi\in C_c^{\infty}([0,T)\times\R^d)$. The uniform bound in $\mathcal{D}'$ together with the support properties imply that $V_n$ converges to $0$ as $n\rightarrow\infty$.
Arguing similarly, we recover for the capillary tensor
\begin{equation*}
\begin{aligned}
    &\int_0^T\int_{\R^d}2\kappa_n\sqrt{\tilde{\rho}_n\widetilde{\Skn}}\psi\dd x\dd t=\kappa^2\int_{0}^T\int_{\R^d}\sqrt{\rho_n}\left(\nabla^2\sqrt{\rho_n}-4(\nabla\rho_n^{\frac{1}{4}}\otimes\nabla\rho_n^{\frac{1}{4}})\right)\eta_n\psi\dd x\dd t\\
    &+2\kappa_n\int_0^T\int_{\R^d}(\sqrt{\tilde\rho_n}-\sqrt{\rho_n})\Skn\eta_n\psi\dd x\dd t\\
    &=\kappa_n^2\int_{0}^T\int_{\R^d}\sqrt{\tilde{\rho}_n}\left(\nabla^2\sqrt{\tilde{\rho}_n}-4(\nabla\tilde{\rho}_n^{\frac{1}{4}}\otimes\nabla\tilde{\rho}_n^{\frac{1}{4}})\right)\psi\dd x\dd t\\
    &+\kappa_n^2\int_{0}^T\int_{\Qn}\left(\sqrt{\rho_n}\left(\nabla^2\sqrt{\rho_n}-4(\nabla\rho_n^{\frac{1}{4}}\otimes\nabla\rho_n^{\frac{1}{4}})\right)\eta_n-\sqrt{\tilde{\rho}_n}\left(\nabla^2\sqrt{\tilde{\rho}_n}
    -4(\nabla\tilde{\rho}_n^{\frac{1}{4}}\otimes\nabla\tilde{\rho}_n^{\frac{1}{4}})\right)\right)\psi\dd x\dd t\\
    &+2\kappa_n\int_0^T\int_{\Qn}(\sqrt{\tilde{\rho}_n}-\sqrt{\rho_n})\Skn\eta_n\psi\dd x\dd t
    \end{aligned}
\end{equation*}
Hence, the error is given by a distribution $K_n$ such that $supp(K_n)\subset \Qn$ and $K_n$ is uniformly bounded in $\mathcal{D}'$ and converges to $0$ as $n$ goes to infinity. We conclude that $(\tilde{\rho}_n,\tilde{u}_n)$ as defined in \eqref{eq:extension} is an approximate truncated weak solution to \eqref{eq:QNSFEWS} with initial data \eqref{eq:datatilde}. \\
It remains to perform the limit as $n$ goes to infinity. Since \eqref{eq:energyextension} and \eqref{eq:BDextensions} are verified, the inequalities \eqref{eq:energyQNS} and \eqref{ineq:BDEQNS} follow from the weak convergences provided by \eqref{eq:extensionwc}, \eqref{eq:uniformextensionk}, the second statement of Lemma \ref{lem:convergenceQNSren}, lower semi-continuity of norms and the following observation. Denote by $\Lambda$ the weak-$\ast$ $L^{\infty}L^2$-limit of $\sqrt{\tilde{\rho}_n}\tilde{u}_n$. Then, since $\Lambda=\sqrt{\rho}u$ whenever $\rho\neq 0$, we infer for a.e. $t\in [0,T)$,
\begin{equation*}
    \int_{\R^d}\frac{1}{2}\rho|u|^2(t)\dd x\leq  \int_{\R^d}\frac{1}{2}|\Lambda|^2(t)\dd x.
\end{equation*}
Next, we wish to pass to the limit in \eqref{eq:approxCE}. We check that 
\begin{equation*}
\begin{aligned}
    \int_{\R^d}\tilde{\rho}_n^0\psi(0,x)\dd x
    &=\int_{\R^d}\rho_n^0\eta_n\psi(0,x)\dd x+\int_{\R^d}(1-\eta_n)\psi(0,x)\dd x\\
    &\rightarrow \int_{\R^d}\rho^0\psi(0,x)\dd x,
    \end{aligned}
\end{equation*}
where the first term converges thank to Lemma \ref{lem:data} and the second term converges to $0$ since for any $\psi$ there exists $n_0\in \N$ such that for all $n\geq n_0$, one has $supp(\psi)\cap supp((1-\eta_n))=\emptyset$. Lemma \ref{lem:convergenceQNS} allows thus to 
pass to the limit on the left-hand-side of \ref{eq:approxCE}. Hence, $(\rho,u)$ satisfies the continuity equation of \eqref{eq:QNSFEWS}. We proceed to the limit of \eqref{eq:MEtrunc}. Firstly, we infer that 
\begin{equation*}
\int_{\R^d}\tilde{\rho}_n^0\bdl(\tilde{u}_n^0)\psi(0,x)\dd x\rightarrow\int_{\R^d}\rho^0\bdl(u_n^0)\psi(0,x)\dd x,
\end{equation*}
by splitting the integral as in \eqref{eq:approxME0}, applying Lemma \ref{lem:data} to the first term and arguing with the support properties to dispose of the remainder. Indeed, $(\bdl(\tilde{u}_n)-\bdl(u_n))\eta_n\neq 0$ only on $\Qn$ and thus
\begin{equation*}
    \int_{\R^d}\rho_n^0(\bdl(\tilde{u}_n)-\bdl(u_n))\eta_n\psi(0,x)\dd x\rightarrow 0.
\end{equation*}
The last term converges to $0$ as $\bdl \in L^{\infty}$ and for sufficiently large $n$ $supp(\psi)\cap supp((1-\eta_n))=\emptyset$. By applying the convergence results of Lemma \ref{lem:convergenceQNS} and Lemma \ref{lem:convergenceQNSren}, we may pass to the limit on the left-hand-side of \eqref{eq:approxME}. We refer the reader to \cite{LV16} and also \cite{AS19} for more details. 
Since $R_\beta^n\eta_n$ is uniformly bounded, there exists a measure $\mu_\beta$ such that 
\[
\left\langle R_\beta^n\eta_n,\psi\right\rangle\rightarrow\left\langle \mu_\beta,\psi\right\rangle. \]
Thus, $(\rho,u)$ satisfies \eqref{eq:approxCE}, \eqref{eq:MEtrunc} with $F=G=0$. Similarly, since $\overline{R_\beta}^n\eta_n$ is uniformly bounded, there exists a measure $\overline{\mu_\beta}$ such that 
\[
\left\langle \overline{R_\beta}^n\eta_n,\psi\right\rangle\rightarrow\left\langle \overline{\mu_\beta},\psi\right\rangle.
\]
Given that $V_n, K_n$ converge to $0$ in the limit as $n\rightarrow \infty$, we infer that the compatibility conditions are satisfied in the limit.   
\end{proof}

\section{Proof of the main Theorems}\label{sec:weak}
It remains to show that the truncated finite energy weak solution $(\rho,u)$ provided by Theorem \ref{thm:extensions} is a finite energy weak solution to \eqref{eq:QNSFEWS} with \eqref{eq:farfield}. We proceed as in \cite{LV16}. 
The proof uses a local argument for which the far-field plays a minor role, we omit the details.

\begin{proof}[Proof of Theorem \ref{thm:mainQNS}]
Given initial data $(\sqrt{\rho^0},\sqrt{\rho^0}u^0)$ of finite energy, i.e. such that \eqref{eq:energy} is bounded, concatenating Lemma \ref{lem:data}, Theorem \ref{thm:FEWStorus} and Theorem \ref{thm:extensions} provides a finite energy truncated weak solution $(\rho,u)$ to \eqref{eq:QNSFEWS} with far-field condition \eqref{eq:farfield} in the sense of Definition \ref{defi:truncatedsolution}. It remains to show that $(\rho,u)$ is a finite energy weak solution to \eqref{eq:QNSFEWS} according to Definition \ref{defi:FEWS}. For that purpose, we recall that $\beta_{\delta}^l$ in \eqref{eq:MEtrunc} is as stated in Definition \ref{defi:truncation}. From Lemma \ref{lem:truncation} and the uniform bounds , we conclude by applying the dominated convergence Theorem that for any $1\leq l\leq 3$ and any compact set $K\subset \R^d$,
\begin{equation*}
\begin{aligned}
   & \rho\beta_\delta^l(u)\rightarrow\rho u \qquad \text{in} \quad L^1((0,T)\times K), \qquad \sqrt{\rho}\beta_\delta^l(u)\rightarrow\sqrt{\rho} u^l \qquad \text{in} \quad L^{2}((0,T)\times K),\\
   & \rho^{\frac{\gamma}{2}}\nabla\rho^{\frac{\gamma}{2}}\nabla_y\beta_{\delta}^l(u)\rightarrow \rho^{\frac{\gamma}{2}}\nabla\rho^{\frac{\gamma}{2}} \qquad \text{in} \quad L^1((0,T)\times K),\\
  &  \sqrt{\rho}(\sqrt{\nu}\Sv+\kappa\Sk)\nabla_y\beta_\delta^l(u)\rightarrow \sqrt{\rho}(\sqrt{\nu}\Sv+\kappa\Sk)\qquad \text{in} \quad L^1((0,T)\times K).
\end{aligned}
\end{equation*}
Further we have that 
\begin{equation*}
    \|\mu_{\beta}\|_{\mathcal{M}}+ \|\overline{\mu}_{\beta}\|_{\mathcal{M}}\leq 2M\delta.
\end{equation*}
Thus, performing the $\delta$-limit in \eqref{eq:MEtrunc} yields a weak solution to the momentum equation of \eqref{eq:QNSFEWS} which completes the proof.
\end{proof}

Next, we comment on the proof of Corollary \ref{coro:degenerate}.
\begin{proof}[Proof of Corollary \ref{coro:degenerate}]
If $\kappa=0$, the BD entropy of the initial data yields a $L^2(\R^d)$ bound for $\nabla\sqrt{\rho^0}$. We may then construct periodic initia data on $\T_n^d$ by means of Lemma \ref{lem:data}. Defining $\kappa_n$ as in \eqref{eq:kappan}, concatenating Theorem \ref{thm:FEWStorus} and  Theorem \ref{thm:extensions} provides a truncated finite energy weak solution to \eqref{eq:QNSFEWS}. Furthermore, the capillary tensor $\widetilde\Skn$ is uniformly bounded in $L_{loc}^2(0,T;L^2(\R^d))$ and since $\kappa_n\rightarrow 0$ the corresponding contribution in \eqref{eq:approxME} satisfies
\begin{equation*}
    \int_{0}^T\int_{\R^d}2\kappa_n\widetilde{\Skn}\nyb(\tilde{u}_n)\cdot \nabla\psi \dd x\dd t \rightarrow 0.
\end{equation*}
Thus, the pair $(\sqrt{\rho},u)$ is a truncated weak solution to \eqref{eq:QNSFEWS} with $\kappa=0$. Proceeding as in the proof of Theorem \ref{thm:mainQNS} we carry out the $\delta$-limit to obtain a finite energy weak solution.
\end{proof}

Finally, we sketch the proof of Theorem \ref{thm:mainQNS2}. It follows the strategy of proof of Theorem \ref{thm:mainQNS}. Minor modifications are necessary to adapt the method to the different far-field behavior. However, gaining integrability for $\rho$ in the present setting simplifies the proof.

\begin{proof}[Proof of Theorem \ref{thm:mainQNS2}]
If the system is considered with trivial far-field behavior, then the internal energy is given by \eqref{eq:energygammalaw}. Let $(\sqrt{\rho^0},\sqrt{\rho^0}u^0)$ be initial data of finite energy and BD-entropy. For $\eta_n$ as in \eqref{eq:eta}, let
\begin{equation*}
    \sqrt{\rho_n^0}= \sqrt{\rho_n^0}\eta_n, \qquad  \sqrt{\rho_n^0}u_n^0= \sqrt{\rho_n^0}u_n^0\eta_n.
\end{equation*}
One easily verifies that the respective version of Lemma \ref{lem:data} is still valid and provides periodic initial data. We then exploit Theorem \ref{thm:FEWStorus} to prove the respective version of Theorem \ref{thm:extensions} yielding a truncated finite energy weak solution on the whole space.  The renormalized internal energy is substituted by \eqref{eq:energygammalaw} and we adapt the related uniform bounds. Notice that the extensions \eqref{eq:extension} are defined by the stationary solution $(\rho=0, u=0)$ away from $[-n,n]^d$.
The $\delta$-limit is then performed analogously to the proof of Theorem \ref{thm:mainQNS}.
\end{proof}

\section*{Acknowledgements} The authors acknowledge support by INdAM-GNAMPA through the project "Esistenza, limiti singolari e comportamento asintotico per equazioni Eulero/Navier--Stokes--Korteweg". This paper was mostly prepared while the second author was a PhD student at GSSI, which he acknowledges. The second author would also like to acknowledge the University of L'Aquila for the support through the scholarship "Studio di strutture coerenti in fluidi quantistici". We thank Christophe Lacave for the careful reading of the paper and several useful comments. 



\bibliographystyle{plain}

\end{document}